%Revised Nov 16,  2014 Fixed Hessian second ff
%\overfullrule=1mm
\documentclass[11pt,reqno]{amsart}
\usepackage{mathrsfs, amsmath,amssymb,amsfonts, amsthm, dsfont}
\usepackage{float}
\restylefloat{table}
\usepackage{hyperref}
\usepackage{epsfig}
\usepackage{amscd}
\usepackage{amsmath, amssymb, amsthm}
\usepackage{graphics}
\usepackage{color}
\usepackage{dsfont}

\newtheorem{theorem}{Theorem}[section]
\newtheorem{conjecture}{Conjecture}[section]
\newtheorem{proposition}[theorem]{Proposition}
\newtheorem{corollary}[theorem]{Corollary}
\newtheorem{lemma}[theorem]{Lemma}
\newtheorem{remark}[theorem]{Remark}
\newtheorem{definition}[theorem]{Definition}

\numberwithin{equation}{section}

%%%%%%%%%%

%%%%%

\renewcommand{\Pi}{\varPi}

\renewcommand{\epsilon}{\varepsilon}

\numberwithin{equation}{section}

\newenvironment{rem}{\medskip\noindent{\it Remark:\/} }{\medskip}

\newcommand{\R}{{\mathbb R}}

\newtheorem*{main-theorem}{Main Theorem}
\newtheorem*{old-thm}{Theorem}
\theoremstyle{definition}
\numberwithin{equation}{section}

\def\11{\mathds{1}}

\def\phi{\varphi}

\def\be{\begin{eqnarray*}}
\def\ee{\end{eqnarray*}}
\def\ben{\begin{eqnarray}}
\def\een{\end{eqnarray}}

\def\L2R{L_{\text{Rest}}^2}

\newcommand{\lcal}{\mathcal{L}}

\newcommand{\ocal}{\mathcal{O}}

%%%%%%%

%%%%%
\begin{document}
\title[Number of nodal domains]{Number of nodal domains of eigenfunctions on non-positively curved
surfaces with concave boundary}

\author{Junehyuk Jung}
\address{Department of Mathematical Science, KAIST, Daejeon 305-701, South Korea}
\curraddr{School of Mathematics, IAS, Princeton, NJ 08540, USA}
\email{junehyuk@ias.edu}

\author{Steve Zelditch}
\address{Department of Mathematics, Northwestern  University, Evanston, IL 60208, USA}
\email{zelditch@math.northwestern.edu}

\begin{abstract}
It is an open problem in general to prove that there exists a sequence of $\Delta_g$-eigenfunctions $\phi_{j_k}$  on
a Riemannian manifold $(M, g)$ for which the number $N(\phi_{j_k}) $ of nodal domains tends to infinity with the
eigenvalue. Our main result is that $N(\phi_{j_k}) \to \infty$ along a subsequence of eigenvalues of density 1 if
the $(M, g)$ is a non-positively curved surface with concave boundary, i.e. a generalized Sinai or Lorentz billiard.
Unlike the recent closely related  work of Ghosh-Reznikov-Sarnak and of the authors on the nodal domain  counting problem,
the surfaces need not have any symmetries.

\end{abstract}
\maketitle

%\tableofcontents
\section{Introduction}

Let $(M, g)$ be a surface with  non-empty smooth boundary $\partial M \neq \emptyset$.  We consider the eigenvalue problem,
$$\left\{ \begin{array}{l}  -\Delta \phi_{\lambda} = \lambda^2 \phi_{\lambda},\\
B \phi_{\lambda}  = 0 \;\; \mbox{on}\;\; \partial M \end{array} \right.,$$
where $B$ is the boundary operator,
e.g. $B \phi = \phi|_{\partial M}$ in the Dirichlet case or $B
\phi = \partial_{\nu} \phi|_{\partial M}$ in the Neumann case.     We  denote by $\{\phi_{j}\}$
an orthonormal basis of eigenfunctions,  $ \langle \phi_j, \phi_k \rangle = \delta_{jk}$, with
$0 = \lambda_0 < \lambda_1 \leq \lambda_2 \leq \cdots $  counted with multiplicity. The inner product is
defined by   $\langle f, g \rangle = \int_{M} f \bar{g} dA$ where $dA$ is the area form of $g$.
We denote the nodal line of $\phi_{\lambda} $ by
\[
Z_{\phi_{\lambda}} = \{x: \phi_{\lambda}(x) = 0\}.
\] We also denote by $N(\phi_{\lambda} )$ the number of nodal domains of $\phi_{\lambda}$, i.e. the number of connected components
$\Omega_j$ of
$$M \backslash (Z_{\phi_{\lambda}} \cup \partial M)=  \bigcup_{j = 1}^{N(\phi_{\lambda} )} \Omega_j. $$
 The connected components are called the  nodal domains of $\phi_{\lambda}$.
We further denote by
\[
\Sigma_{\phi_{\lambda}} = \{x \in Z_{\phi_{\lambda}}: d \phi_{\lambda}(x) = 0\}
\]
the  singular set of $\phi_{\lambda}.$
The main result of this article, developing the method of \cite{JZ},  gives a rather  general sufficient condition on surface $M$ with non-empty boundary $\partial M \not= \emptyset$ and ergodic billiard flow under which the number of nodal domains tends
to infinity along a full density subsequence (i..e of `almost all' eigenfunctions  of any orthonormal basis).  A significant gain in the billiard case is that we do not require
the surface (or eigenfunctions) to have a symmetry.

\begin{center}
\includegraphics[scale=.8]{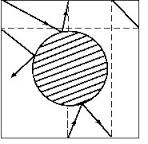}
%\end{center}
%\begin{center}
 \includegraphics[scale=.6]{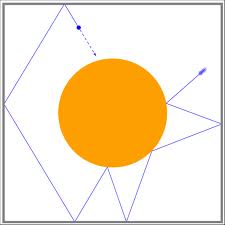}
\end{center}
\bigskip

We first state the result for the special case of  ``Sinai-Lorentz" billiards, i.e. non-positively curved surfaces with concave boundary.     We denote the scalar curvature of $(M, g)$
by $K$.

\begin{theorem}\label{theo1}
Let $(M, g) $ be a non-positively curved  surface $K \leq 0$ with non-empty smooth
concave boundary $\partial M$. Let  $\{\phi_j\}$ be an orthonormal eigenbasis of Dirichlet (resp. Neumann) eigenfunctions.
 Then there exists a subsequence $A \subset \mathbb{N}$
of density one so that
\[
\lim_{\substack{j \to \infty \\ j \in A}}N(\phi_j) = \infty.
\]

\end{theorem}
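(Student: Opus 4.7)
The strategy I would pursue develops the method of \cite{JZ}, with the boundary $\partial M$ playing the role that the fixed-point set of a symmetry played there. The plan has three steps: a topological lower bound for $N(\phi_\lambda)$ in terms of sign changes of the Cauchy data on $\partial M$, boundary quantum ergodicity for the sequence $\{\phi_j\}$, and a quantitative argument that converts equidistribution into a growing number of sign changes.

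\textbf{Step 1 (Topological lower bound).} I would first establish an inequality of the form $N(\phi_\lambda) \geq \tfrac{1}{2}\, S(\phi_\lambda) - C$, where $S(\phi_\lambda)$ denotes the number of sign changes along $\partial M$ of $\partial_\nu \phi_\lambda$ in the Dirichlet case, or of $\phi_\lambda|_{\partial M}$ in the Neumann case. The idea is that at each nondegenerate sign change a nodal arc emanates transversely from $\partial M$ into the interior, and two consecutive such arcs together with the boundary segment between them bound a distinct component of $M \setminus (Z_{\phi_\lambda} \cup \partial M)$. Concavity of $\partial M$ enters here to ensure transversality and to prevent an emanating arc from returning tangentially to $\partial M$ at short distances. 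Singular points in $\Sigma_{\phi_\lambda}$ can in principle glue arcs together, but one controls their contribution by the crude polynomial bound $|\Sigma_{\phi_\lambda}| = O(\lambda^2)$ combined with an Euler-characteristic count on the planar graph formed by $Z_{\phi_\lambda} \cup \partial M$.

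\textbf{Step 2 (Boundary quantum ergodicity).} The assumption $K \leq 0$ together with a strictly concave reflecting boundary yields a uniformly hyperbolic and ergodic billiard flow on $(M,g)$, by classical results of Sinai, Bunimovich, Chernov, and Pesin on dispersing billiards. The boundary quantum ergodicity theorems of G\'erard--Leichtnam, Burq, and Hassell--Zelditch then apply: along a subsequence $A \subset \NN$ of density one, the microlocal lifts of the Cauchy data of $\phi_j$ converge weak-$*$ to the natural Liouville measure on $B^*(\partial M)$. As an immediate corollary, for any fixed open arc $I \subset \partial M$ one obtains uniform $L^2$ lower bounds $\|\lambda_j^{-1} \partial_\nu \phi_j\|_{L^2(I)}^2 \gtrsim |I|$ in the Dirichlet case, and $\|\phi_j\|_{L^2(I)}^2 \gtrsim |I|$ in the Neumann case, for $j \in A$ sufficiently large.

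\textbf{Step 3 (Growing sign count: the main obstacle).} The difficult part is converting the $L^2$ equidistribution of Step 2 into the statement $S(\phi_j) \to \infty$ along $A$, since a priori all of the $L^2$ mass could accumulate on a single sign. I would argue by contradiction: suppose $S(\phi_{j_k}) \leq S_0$ along a positive-density subsequence. Then $\partial M$ decomposes into at most $S_0+1$ arcs on each of which the Cauchy data has a fixed sign, so on at least one fixed arc $I$ the Cauchy data would carry substantial $L^2$ mass of a single sign. Pairing the Cauchy data against a sign-changing semiclassical test function supported in $I$ and using Sogge-type $L^\infty$ (or small-scale $L^2$) upper bounds on boundary restrictions of eigenfunctions, one sees that such constant-sign concentration is incompatible with the oscillation required by the high-frequency Helmholtz equation and with equidistribution against the (sign-indefinite) Liouville measure. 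This contradicts the assumption, giving $S(\phi_j) \to \infty$ along a density-one subsequence, and Step 1 then forces $N(\phi_j) \to \infty$ along the same subsequence, completing the proof.
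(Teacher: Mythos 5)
Your proposal correctly reproduces the paper's overall architecture: a topological Euler-characteristic inequality bounding $N(\phi_\lambda)$ from below by a fraction of the number of boundary sign changes of the Cauchy data, combined with boundary quantum ergodic restriction to force those sign changes to proliferate along a density-one subsequence. Steps 1 and 2 are in the right spirit. However, Step 3 has a genuine gap, and it is precisely the step where the proof has to close.

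The difficulty is that the boundary QER theorem controls only the \emph{quadratic} functional $\int_\beta f\,|\phi_j^b|^2\,ds$, not the \emph{linear} functional $\int_\beta f\,\phi_j^b\,ds$, and knowing that the former equidistributes tells you nothing directly about how small the latter is. Your appeal to ``pairing against a sign-changing test function'' and ``oscillation required by the Helmholtz equation'' is not a quantitative mechanism; and the Liouville limit measure is positive, not sign-indefinite, so equidistribution against it is compatible a priori with constant-sign concentration. What the paper actually uses to close the argument is a \emph{Kuznecov sum formula on the boundary} (Theorem \ref{K}, from \cite{HHHZ}):
\[
\sum_{\lambda_j<\lambda}\left|\int_{\partial M} f\,\phi_j^b\,ds\right|^2 = \left(\frac{2}{\pi}\int_{\partial M}f^2\,ds\right)\lambda + o(\lambda),
\]
which via Chebyshev gives $|\int_\beta f\,\phi_j^b\,ds| \le M\lambda_j^{-1/2}$ on a subsequence of proportion $\ge 1 - c_f/M$ (Corollary \ref{cor81}). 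Meanwhile the QER lower bound $\int_\beta f|\phi_j^b|^2\,ds \gg_f 1$ together with the sup-norm bound $\|\phi_j^b\|_{L^\infty} = o(\lambda_j^{1/2})$ of Theorem \ref{SoZ} gives $\int_\beta f|\phi_j^b|\,ds \ge \int_\beta f|\phi_j^b|^2\,ds\big/\|\phi_j^b\|_{L^\infty} \gg \lambda_j^{-1/2}/o(1)$. If $\phi_j^b$ had no sign change on $\beta$ these two quantities would coincide, a contradiction for $j$ large; hence a sign change on every fixed arc for a density-one subsequence, and then a diagonal argument (Lemma \ref{lem2}) yields a single density-one subsequence with unboundedly many sign changes. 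Without the Kuznecov input there is no mechanism in your Step 3 that forces $\int_\beta\phi_j^b\,ds$ to be small, so the contradiction does not materialize.

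Two smaller points. First, concavity plays no role in the topological step: the paper's Theorem \ref{topology} is a pure Euler-characteristic count for the embedded nodal graph and does not need transversality arguments nor the crude bound $|\Sigma_{\phi_\lambda}| = O(\lambda^2)$. Concavity is used on the dynamical/analytic side: it yields ergodicity of the billiard flow (Kr\`amli--Sim\`anyi--Sz\`asz), and it ensures absence of boundary self-focal points (Lemma \ref{NFP}), which is the hypothesis needed for the sup-norm estimate of Theorem \ref{SoZ}; the Melrose--Taylor diffractive parametrix for concave boundaries is what makes that sup-norm proof work. Second, Theorem \ref{theo1} is deduced in the paper by verifying the two hypotheses of the more general Theorem \ref{theo1a} (ergodicity and the $o(\lambda_j^{1/2})$ Cauchy data bound), and it is Theorem \ref{theo1a} whose proof has the structure you describe; keeping this modular structure makes clear exactly where each geometric assumption is used.
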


By a surface of non-positive curvature with concave boundary,  we mean
 a non-positively curved surface $(M, g)$,
\begin{equation} \label{M}
M= X \backslash \bigcup_{j = 1}^r \mathcal{O}_j,
\end{equation}
obtained by removing a finite union $\mathcal{O} : = \bigcup_{j = 1}^r \mathcal{O}_j$ of embedded nonintersecting geodesically  convex domains (or `obstacles')
$\mathcal{O}_j$ from a closed non-positively curved surface $(X, g)$ without
boundary. It   is proved in \cite{Si1,cs,bcs} that the billiard flow of a Sinai-Lorentz billiard is ergodic.

We give in fact a  more general condition,  which requires
some further notation and terminology.

\bigskip

\begin{definition} \label{CDDEF}
The Cauchy data of Neumann (resp. Dirichlet) eigenfunctions of $(M,g)$ is defined by
$$\left\{ \begin{array}{ll}  \phi_j^b = \phi_j |_{\partial M}, & \; \mbox{ Neumann boundary conditions}, \\ \\
\phi_j^b =
\lambda_j^{-1} \partial_{\nu} \phi_j |_{\partial M}, & \; \mbox{Dirichlet boundary conditions}, \end{array} \right.
$$
\end{definition}

The more general result is:

\begin{theorem}\label{theo1a}
Let $(M, g) $ be a surface with non-empty smooth boundary $\partial M$. Let  $\{\phi_j\}$ be an orthonormal eigenbasis of Dirichlet (resp. Neumann) eigenfunctions. Assume that $(M, g)$ satisfies
%whose billiard flow $G^t$ satisfies the following
%dynamical assumptions:
 the following conditions:

\begin{itemize}

\item[(i)]  The billiard flow $G^t$ is ergodic on $S^* M$ with respect to Liouville measure;
%or equivalently $\beta$ is ergodic on $B^* \partial M$;
\bigskip

%\item[(ii)] There does not exist a boundary {\it self-focal} point $q \in \partial M$ %for the billiard flow, i.e. a point $q$ such that
%the set  $\mathcal{L}_q$ of loop  directions $\eta \in B^*_q \partial M$ has %positive measure in $B^*_q \partial M$.\bigskip

\item[(ii)]  The sup norms of the Cauchy data
\[
(\phi_j |_{\partial M}, \lambda_j^{-1} \partial_{\nu} \phi_j |_{\partial M})
\]
are  $o(\lambda_j^{\frac{1}{2}})$ as $\lambda_j \to \infty$.

\end{itemize}

   Then there exists a subsequence $A \subset \mathbb{N}$
of density one so that
\[
\lim_{\substack{j \to \infty \\ j \in A}}N(\phi_j) = \infty.
\]

\end{theorem}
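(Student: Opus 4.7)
The plan is to adapt the approach of JZ to the boundary setting, with $\partial M$ playing the role that the fixed-point set of a symmetry plays there. The argument reduces the theorem to two claims: (a) a topological lower bound $N(\phi_j) \geq \tfrac{1}{2}\,\#\{\text{sign changes of } \phi_j^b \text{ on } \partial M\} - C(M, g)$, and (b) that this sign-change count diverges along a density-one subsequence. Claim (a) is an Euler-formula argument applied to the embedded graph $Z_{\phi_j} \cup \partial M$: in the Dirichlet case the boundary lies in the nodal set, each sign change of $\partial_\nu \phi_j$ is the base of a nodal arc entering $M$ transversally, and these arcs separate distinct nodal domains; the Neumann case is symmetric with $\phi_j|_{\partial M}$ playing the role of $\partial_\nu \phi_j|_{\partial M}$. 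The finitely many singular points of $\phi_j$ on $\partial M$ are absorbed by a small perturbation using Cheng's structure theorem.

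For claim (b), the first input is the quantum ergodic restriction theorem for Cauchy data on $\partial M$, due to Burq and Hassell--Zelditch, which uses only hypothesis (i). It produces a density-one subsequence along which
$$\langle \Op_h(a)\phi_j^b, \phi_j^b\rangle_{L^2(\partial M)} \longrightarrow \int_{B^*\partial M} a \, d\mu, \qquad h = \lambda_j^{-1},$$
for every symbol $a \in S^0(T^*\partial M)$, where $\mu$ is a positive measure of nonzero total mass supported on the hyperbolic part of $B^*\partial M$. In particular $\|\phi_j^b\|_{L^2(\partial M)} \geq c > 0$ along the subsequence, and the $L^2$ mass of $\phi_j^b$ is microlocally concentrated at frequencies $\sim \lambda_j$ and bounded away from the zero section.

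The hard step is to combine QER with the sup-norm hypothesis (ii) to show that the sign-change count diverges. Argue by contradiction: if $\phi_{j_k}^b$ had at most $N$ sign changes for all $k$, then after a further subsequence the sign-change points stabilize and $\phi_{j_k}^b$ is eventually of constant sign $\varepsilon_\ell \in \{\pm 1\}$ on each of a fixed collection of intervals $I_\ell \subset \partial M$. Testing $\phi_{j_k}^b$ against the piecewise-constant step function $\psi := \sum_\ell \varepsilon_\ell \mathbf{1}_{I_\ell}$ yields the lower bound
$$\bigl|\langle \phi_{j_k}^b, \psi \rangle\bigr| = \int_{\partial M} |\phi_{j_k}^b| \, ds \geq \|\phi_{j_k}^b\|_{L^2(\partial M)}^2 \, / \, \|\phi_{j_k}^b\|_{L^\infty(\partial M)},$$
which is controlled below using hypothesis (ii) together with QER. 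On the other hand, smoothing $\psi$ at an intermediate semiclassical scale $h_k = \lambda_{j_k}^{-\alpha}$, $0 < \alpha < 1$, and invoking the microlocal concentration of $\phi_j^b$ at frequencies $\sim \lambda_j$ gives an upper bound $|\langle \phi_{j_k}^b, \psi\rangle| = o(1)$ that beats the lower bound, provided the sup-norm hypothesis also controls the smoothing error near the jumps of $\psi$. This contradiction gives (b); together with (a) it completes the proof. The main obstacle is precisely this balancing: the threshold $o(\lambda_j^{1/2})$ in hypothesis (ii) is the borderline at which the scales governing microlocal decay of $\langle \phi_j^b, \psi_{\text{smooth}}\rangle$ and the smoothing error $\|\phi_j^b\|_\infty \cdot h_k$ can be simultaneously made negligible.
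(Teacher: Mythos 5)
Your topological step (a) and your lower bound $\int_{\partial M}|\phi_j^b|\,ds \geq \|\phi_j^b\|_{L^2}^2/\|\phi_j^b\|_\infty$ are essentially the paper's; the latter, combined with QER (which gives $\|\phi_j^b\|_{L^2}^2\geq c>0$ along a density-one sequence) and hypothesis (ii), yields the lower bound $\gtrsim \lambda_j^{-1/2}\cdot\omega(1)$ with $\omega(1)\to\infty$ slowly. The gap is in your upper bound, and it is real.

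You propose to show $|\langle\phi_j^b,\psi\rangle|=o(1)$ by smoothing the step function at scale $h=\lambda_j^{-\alpha}$, $0<\alpha<1$, and invoking microlocal localization. First, $o(1)$ is not strong enough: the lower bound $\lambda_j^{-1/2}\omega(1)$ can itself tend to zero (whenever $\|\phi_j^b\|_\infty\to\infty$), so you need an upper bound of order $o(\lambda_j^{-1/2})$ — and this is exactly what the balancing cannot deliver. The smoothing error on the $O(N)$ jumps of width $\sim h$ is bounded by $\|\phi_j^b\|_\infty\cdot Nh = o(\lambda_j^{1/2}h)$, which is $o(\lambda_j^{-1/2})$ only if $h = o(\lambda_j^{-1})$; but then $\psi_h$ has Fourier support out to frequency $\gg \lambda_j$, i.e.\ $h$-frequency $\gg 1$, and can no longer be separated microlocally from $\phi_j^b$. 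Conversely, $h\gg\lambda_j^{-1}$ keeps $\psi_h$ at low $h$-frequency but forces the smoothing error $\gg \lambda_j^{-1/2}$. Second, even the low-frequency term $\langle\phi_j^b,\psi_h\rangle$ is not controlled by QER, because QER (Theorem \ref{main}) is a fixed-symbol, rate-free statement: the limit measure $\omega_B$ has positive density at $\eta=0$ on $B^*\partial M$, so there is no a priori concentration of $\phi_j^b$ at high $h$-frequency, and a diagonal argument against symbols shrinking to $\{\eta=0\}$ at rate $\lambda_j^{\alpha-1}$ requires a quantitative rate in QER that is not available.

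What is missing is precisely the paper's second analytic ingredient, the Kuznecov sum formula (Theorem \ref{K} of \cite{HHHZ}), $\sum_{\lambda_j<\lambda}|\langle f,\phi_j^b\rangle|^2 = c_f\lambda + o(\lambda)$, proved via a wave-kernel parametrix on $\RR\times\partial M\times\partial M$. Via Chebyshev (Corollary \ref{cor81}), this gives $|\langle f,\phi_j^b\rangle|\leq M\lambda_j^{-1/2}$ for a $\geq 1-c/M$ proportion of $j$ — the quantitative $\lambda_j^{-1/2}$ decay that QER alone cannot produce and that your smoothing argument cannot recover. With that in hand, the paper compares the two bounds $M\lambda_j^{-1/2}$ and $\lambda_j^{-1/2}\omega(1)$ at the shared scale $\lambda_j^{-1/2}$, and lets $M\to\infty$. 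Your proposal correctly identifies the borderline nature of $o(\lambda_j^{1/2})$, but resolving it needs the Kuznecov-type mean-square estimate rather than a purely microlocal cutoff.
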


Theorem \ref{theo1} follows from Theorem \ref{theo1a} combined with
some new results of the second author with C. Sogge.  The
ergodicity condition
(i) is known to be satisfied for a non-positively  curved surface with
concave boundary  \cite{KSS} (see \S \ref{BILLIARDS}).   Moreover, in   \cite{SoZ3} the second condition (ii) is proved for such surfaces,
among many other cases. Using the Melrose-Taylor diffractive parametrix
on manifolds with concave boundary, the following is proved:

\begin{theorem} \label{SoZ} \cite{SoZ3}
Let $(M, g)$ be a Riemannian manifold of dimension n with geodesically concave boundary. Suppose that
there exist no boundary  self-focal points $q \in \partial M$.  Then
the sup-norms of Cauchy data of Dirichlet, resp. Neumann,  eigenfunctions are  $o(\lambda_j^{\frac{n-1}{2}})$.
\end{theorem}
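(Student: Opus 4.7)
\medskip
\noindent\textbf{Proof proposal for Theorem \ref{SoZ}.} The plan is to derive the sup-norm estimate from a pointwise two-term local Weyl law for the Cauchy-data spectral function, via a Fourier Tauberian theorem. Fix $q \in \partial M$ and set
$$N_b(\lambda, q) = \sum_{\lambda_j \leq \lambda} |\phi_j^b(q)|^2.$$
An individual bound $|\phi_j^b(q)|^2 = o(\lambda_j^{n-1})$ follows from the one-interval asymptotic
$$N_b(\lambda+1, q) - N_b(\lambda, q) = c_n \lambda^{n-1} + o(\lambda^{n-1}),$$
uniformly in $q$ on compact subsets of $\partial M$, by the Fourier Tauberian theorem of Safarov. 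Taking $\sup_{q \in \partial M}$ then gives the claimed $o(\lambda_j^{(n-1)/2})$ bound on Cauchy-data sup norms.

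The core of the proof is to establish this pointwise Weyl law. Form the smoothed measure $\rho \ast dN_b(\cdot,q)$ with $\rho$ an even Schwartz function whose Fourier transform has small compact support. By Fourier inversion it equals the diagonal value $(q,q)$ of
$$\int_{\RR} \hat{\rho}(t)\,[\cos(t\sqrt{-\Delta_B})](q,q)\,dt,$$
where $\Delta_B$ denotes the Laplacian with the chosen boundary condition and, in the Dirichlet case, one additionally applies $\lambda^{-1}\partial_\nu$ in both arguments before restricting to the boundary. Since $\partial M$ is geodesically concave, no gliding rays propagate along $\partial M$; hence the Melrose--Taylor diffractive parametrix provides a microlocal representation of $\cos(t\sqrt{-\Delta_B})$ near the boundary as a finite sum of transverse reflection pieces plus an explicit Airy-integral term absorbing all glancing directions. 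Substituting this parametrix and performing stationary phase in the cotangent fiber $T_q^* \partial M$ yields the Weyl main term $c_n\lambda^{n-1}$, with a remainder $R(\lambda,q)$ governed by those covectors $\eta \in S_q^* M$ whose billiard trajectory $G^t(q,\eta)$ returns to $q$.

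The improvement from $O$ to $o$ now comes from the no-self-focal hypothesis: by definition the boundary loop set
$$\mathcal{L}_q = \{\eta \in S_q^* M : G^t(q,\eta) \in S_q^* M \text{ for some } t > 0\}$$
has measure zero in $S_q^* M$ precisely when $q$ is not self-focal. Combining this measure-zero estimate with propagation of singularities for the diffractive parametrix shows that the long-time contribution to the Fourier integral is $o(\lambda^{n-1})$, while the short-time contribution produces the Weyl principal term. A cosine Tauberian argument then upgrades the smoothed asymptotic into the one-interval asymptotic above, uniformly in $q$.

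The main technical obstacle is Step~3: constructing and tracking the Melrose--Taylor parametrix uniformly as $q$ approaches the glancing set, and controlling its long-time behavior along billiard orbits under the concavity hypothesis where every grazing ray leaves the boundary immediately. Once this boundary-adapted parametrix is in place, the loop-set measure-zero computation, the stationary phase evaluation, and the Tauberian extraction proceed by the same scheme the second author and Sogge developed in the boundaryless setting, now transplanted to $\partial M$ through the diffractive parametrix.
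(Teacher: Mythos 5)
The paper itself does not prove Theorem~\ref{SoZ}: it is quoted from the external reference \cite{SoZ3}, and the authors explicitly state that ``the proof is too lengthy to be included here and will be published elsewhere.'' What the paper does provide is a sketch of the intended strategy (the singularity at $t=0$ of $E_B^b(t,q,q)=\sum_j\cos t\lambda_j|\phi_j^b(q)|^2$, the Melrose--Taylor diffractive parametrix near glancing rays, a pointwise Weyl law on $\partial M$, and the self-focal hypothesis forcing the relevant loop set to have measure zero). Your proposal reproduces exactly that architecture --- smoothed pointwise spectral function, diffractive parametrix, stationary phase, measure-zero loop set giving the $o$-improvement, Tauberian theorem to unsmooth --- so it is consistent with the proof strategy the paper attributes to \cite{SoZ3}. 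Since the paper contains no detailed proof, a line-by-line comparison is not possible.

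One point in your Step~1 does not quite work as stated. The one-interval asymptotic
\[
N_b(\lambda+1,q)-N_b(\lambda,q)=c_n\lambda^{n-1}+o(\lambda^{n-1})
\]
only yields $|\phi_j^b(q)|^2=O(\lambda_j^{n-1})$, not $o(\lambda_j^{n-1})$: all of the unit-interval mass could in principle sit on one eigenvalue. The correct statement to extract from Safarov's Tauberian theorem is the two-term uniform Weyl law $N_b(\lambda,q)=c_n\lambda^n+o(\lambda^{n-1})$, from which one deduces, for every $\delta>0$,
\[
|\phi_j^b(q)|^2\leq N_b(\lambda_j+\delta,q)-N_b(\lambda_j-\delta,q)=2nc_n\delta\lambda_j^{n-1}+o(\lambda_j^{n-1}),
\]
and then sends $\delta\to 0$ after $\lambda_j\to\infty$. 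This is the standard Sogge--Zelditch shrinking-interval device and is presumably what \cite{SoZ3} does. The rest of your outline --- in particular correctly identifying the Melrose--Taylor construction and its uniform control near the glancing set as the genuinely hard step, which the paper also flags --- is in line with the approach the paper describes.
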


The term {\it boundary self-focal point}  is a
dynamical condition on   the billiard flow
 $\Phi^t$ of  $(M, g, \partial M)$, i.e. the geodesic flow in the interior with
elastic reflection on $\partial M$.  See \S \ref{BILLIARDS} for background. We  denote the broken
exponential map by
$\exp_x \xi = \pi \Phi^1(x, \xi)$ (see \cite{HoI-IV} (Chapter XXIV)  or \cite{MT} for background.)
 Given any $x\in \bar{M}$, we denote by  $\lcal_x$ the set
of loop directions at $x$,
\begin{equation} \lcal_x = \{\xi \in S^*_x M : \exists T: \exp_x T
\xi = x \}.
\end{equation}
% We also define  the tangential loop locus  by
%\begin{equation} \Lambda_x = \{\eta \in T_x^* M: \exp_x \eta = x\}.  \end{equation}

\begin{definition}
We say that   $q$ is a boundary self-focal  point if $|\lcal_q| > 0$ where
$|\cdot|_q$ denotes the surface measure on   $S^*_{q, \rm{in}} M$, the set of inward pointing unit tangent
vectors  determined by
the metric $g_q$. Equivalently, the measure of the set of $\xi \in B^*_q \partial M$ of
the co-ball bundle $B^*_q \partial M$ of the boundary at $q$ for which
$\pi \beta (q, \xi) = (q, \eta)$ for some $\eta \in B^*_q \partial M $ has measure zero. Here, $\beta$ is the billiard map \eqref{BETA}.
\end{definition}

%\item[(ii)] There does not exist a boundary {\it self-focal} point $q \in \partial M$ %for the billiard flow, i.e. a point $q$ such that
%the set  $\mathcal{L}_q$ of loop  directions $\eta \in B^*_q \partial M$ has %positive measure in $B^*_q \partial M$.\bigskip
Billiards on
non-positively curved billiards  on  $(M, g)$ with concave boundary  never
have self-focal points.  Self-focal points $q$ are necessarily self-conjugate, i.e. there
exists a broken  Jacobi field along a geodesic billiard loop at $q$ vanishing at both endpoints. But as we review
in \S \ref{BILLIARDS}, non-positively curved dispersive billiards
as above do not have conjugate points.
Thus, Theorem \ref{SoZ} holds and so all of the hypotheses are Theorem \ref{theo1a} are satisfied by
the surfaces of Theorem \ref{theo1}.

Theorem \ref{SoZ} is the analogue for Cauchy data of the sup norm results on
manifolds without boundary  proved in \cite{SoZ,SoZ2}.  The proof is too
lengthy to be included here and will be published elsewhere.

\subsection{Outline of the proof of Theorem \ref{theo1a}}

We plan to deduce Theorem \ref{theo1a} from Theorem \ref{SoZ} and from
a variety of additional analytical and topological arguments. The principal
analytical results, besides Theorem \cite{SoZ}, are the quantum ergodic
restriction theorems of \cite{HZ,ctz}.
The overall argument follows the strategy of  \cite{JZ}, and is based on counting the zeros of  the Cauchy data  $\phi_j^b$  of an ergodic sequence of eigenfunctions on $\partial M$ and relating them to numbers of nodal domains
using the Euler inequality for embedded graphs.

We now sketch the proof of Theorem \ref{theo1a}.
The analytical part of the proof is the following result about boundary nodal points:

\begin{theorem}\label{theoS}

Let $(M, g)$  be a Riemannian surface which satisfies the assumptions of Theorem \ref{theo1a}.  Then for any given orthonormal eigenbasis of Neumann eigenfunctions $\{\phi_j\}$, there exists a subsequence $A \subset \mathbb{N}$ of density one such that
\[
\lim_{\substack{j \to \infty \\ j \in A}} \#  Z_{\phi_j} \cap \partial M = \infty.
\]
Furthermore, there are an infinite number of zeros where  $\phi_j |_{\partial M }$ changes sign. For Dirichlet eigenbasis $\{\phi_j\}$, there exists a subsequence $A \subset \mathbb{N}$ of density one such that
\[
\lim_{\substack{j \to \infty \\ j \in A}} \#  \Sigma_{\phi_j} \cap \partial M = \infty.
\]
\end{theorem}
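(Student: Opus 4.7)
First I would reduce Theorem~\ref{theoS} to the assertion that the number of sign-change zeros of the scalar Cauchy datum $\phi_j^b$ along $\partial M$ tends to infinity along a density-one subsequence. In the Neumann case $\phi_j^b = \phi_j|_{\partial M}$, so $Z_{\phi_j}\cap\partial M$ is the zero set of $\phi_j^b$ and a sign-change zero of $\phi_j^b$ is a point of $\partial M$ where $\phi_j|_{\partial M}$ changes sign. In the Dirichlet case $\phi_j$ vanishes identically on $\partial M$, so the tangential derivative vanishes as well, and $\Sigma_{\phi_j}\cap\partial M = \{x\in\partial M : \partial_\nu\phi_j(x) = 0\}$ is precisely the zero set of $\phi_j^b = \lambda_j^{-1}\partial_\nu\phi_j|_{\partial M}$. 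Thus a lower bound on the number of sign-change zeros of $\phi_j^b$ lower-bounds every quantity in Theorem~\ref{theoS}.

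The argument would rest on three ingredients. (a) A uniform $L^2$ lower bound $\|\phi_j^b\|_{L^2(\partial M)}^2 \geq c > 0$ along a density-one subsequence $A\subset\mathbb N$. This is supplied by the Cauchy-data quantum ergodic restriction theorems of \cite{HZ,ctz}: under hypothesis (i), for any tangential symbol $a$ supported in $B^*\partial M$,
\[
\langle \Op_{\partial}(a)\phi_j^b,\phi_j^b\rangle_{L^2(\partial M)} \longrightarrow \int_{B^*\partial M} a\, d\mu \qquad (j\in A),
\]
with $\mu$ a nontrivial absolutely continuous measure; setting $a\equiv 1$ gives (a). (b) The $L^\infty$ bound $\|\phi_j^b\|_{L^\infty(\partial M)} = o(\lambda_j^{1/2})$, which is hypothesis (ii). Combining (a) and (b) with the elementary inequality $\|f\|_{L^2}^2 \leq \|f\|_{L^1}\|f\|_{L^\infty}$ gives
\[
\|\phi_j^b\|_{L^1(\partial M)} \geq \frac{c}{\|\phi_j^b\|_{L^\infty(\partial M)}}, \qquad j\in A,
\]
which by (b) is strictly larger in order than $\lambda_j^{-1/2}$. (c) A uniform Kuznecov/Rellich-type arc-integral bound $|\int_I \phi_j^b\, ds| \leq C\lambda_j^{-1/2}$, valid for every arc $I\subset\partial M$ and every $j$.

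Granted (a)--(c), the theorem follows by pigeonhole. If along some positive-density sub-subsequence of $A$ the number of sign-change zeros of $\phi_j^b$ were bounded by a fixed $N$, then $\partial M$ would split into at most $N$ arcs $I_k$ on each of which $\phi_j^b$ has constant sign, hence
\[
\|\phi_j^b\|_{L^1(\partial M)} = \sum_{k=1}^N \Big|\int_{I_k}\phi_j^b\, ds\Big| \leq NC\lambda_j^{-1/2},
\]
contradicting the lower bound from (a)--(b) for $j$ large. Therefore the number of sign-change zeros of $\phi_j^b$ tends to infinity along a density-one subsequence, which yields both conclusions of Theorem~\ref{theoS}.

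The main obstacle is ingredient (c): obtaining uniform-in-$I$ decay of the arc integrals of the Cauchy data at a rate that comfortably beats the $L^1$ lower bound from (a)--(b). Green's identity in the Neumann case yields
\[
\int_{\partial M}\chi\,\phi_j\,ds = \int_M \phi_j\,(\Delta + \lambda_j^2)F\,dA
\]
for any $F$ with $\partial_\nu F|_{\partial M} = \chi$, so the problem reduces to constructing a boundary quasimode whose defect pairs well with $\phi_j$. A natural candidate is $F_{j,\chi}(t,x) = -\lambda_j^{-1}\sin(\lambda_j t)\,\psi(t)\,\chi(x)$ in Fermi coordinates $(t,x)$ near $\partial M$, with $\psi$ a fixed cutoff satisfying $\psi(0) = 1$, $\psi'(0) = 0$; its defect $(\Delta + \lambda_j^2)F_{j,\chi}$ is of tangential size $O(\lambda_j^{-1}\|\chi\|_{C^2})$ plus oscillatory terms from $\psi',\psi''$ that contribute $O(\lambda_j^{-1})$ after pairing with $\phi_j$. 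Taking $\chi$ a mollification of $\mathbf 1_I$ at scale $\delta(j)$ and optimizing against the mollification error $|\int(\mathbf 1_I-\chi)\phi_j\,ds| \leq \delta\,\|\phi_j^b\|_{L^\infty}$ produces the desired bound. The Dirichlet case is handled by the analogous Rellich identity with the roles of $\phi_j$ and $\partial_\nu\phi_j$ exchanged.
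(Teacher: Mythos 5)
Your reduction to sign-change zeros of $\phi_j^b$ and your ingredients (a) and (b) are exactly the ones the paper uses: QER for a density-one $L^2$ lower bound on $\phi_j^b$, and the sup-norm hypothesis (ii), combined via $\|f\|_{L^2}^2 \le \|f\|_{L^1}\|f\|_{L^\infty}$. The gap is in ingredient (c) and in the pigeonhole step that relies on it.

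You require a \emph{uniform} bound $\bigl|\int_I \phi_j^b\,ds\bigr|\le C\lambda_j^{-1/2}$, valid for \emph{every} arc $I$ and \emph{every} $j$. No such bound is available, and it is false in general. The Kuznecov sum formula (Theorem \ref{K}) is an averaged statement: $\sum_{\lambda_j<\lambda}|\int_{\partial M}f\phi_j^b\,ds|^2 = c_f\lambda + o(\lambda)$. Dividing by $N(\lambda)\sim c\lambda^2$ shows the \emph{mean} of $|\int f\phi_j^b\,ds|^2$ is $\sim\lambda_j^{-1}$, but the uniform bound for an individual eigenfunction is only $O(1)$ (already on the disk the radial Neumann modes have boundary traces that are constant and of size $O(1)$, so $\int_I\phi_j^b\,ds=O(1)$, not $O(\lambda_j^{-1/2})$). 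Your Green's-identity quasimode $F_{j,\chi}=-\lambda_j^{-1}\sin(\lambda_j t)\psi(t)\chi(x)$ does not improve this: the defect $(\Delta+\lambda_j^2)F$ contains the mean-curvature term $-H\psi\chi\cos(\lambda_j t)$, which is $O(1)$ and oscillates at the \emph{same} frequency $\lambda_j$ as $\phi_j$, so pairing with $\phi_j$ gives only $O(1)$ by Cauchy--Schwarz, with no non-stationary-phase gain. Moreover your pigeonhole genuinely needs uniformity in the arc, because the arcs $I_k$ on which $\phi_j^b$ has constant sign depend on $j$; a bound valid only for a fixed arc (or for a density-one subsequence depending on that arc) cannot be applied to a $j$-dependent decomposition of $\partial M$.

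The paper circumvents this by never touching $j$-dependent arcs. It fixes finitely many disjoint arcs $\beta_1,\dots,\beta_R$ and a nonnegative cutoff $f_k$ supported in each; it applies Chebyshev to the Kuznecov sum to get, for each $M$, a subsequence of lower density $\ge 1-c/M$ with $|\int f_k\phi_j^b\,ds|\le M\lambda_j^{-1/2}$, while (a)--(b) give $\int f_k|\phi_j^b|\,ds \gg \lambda_j^{-1/2}/o(1)$ on a density-one subsequence. Letting $M\to\infty$ shows the set $A_{\beta_k}$ of $j$ with a sign change on $\beta_k$ has density one; intersecting $A_{\beta_1},\dots,A_{\beta_R}$ and then applying the diagonal Lemma \ref{lem2} (since $R$ was arbitrary) gives the theorem. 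You should replace your uniform bound (c) and its quasimode sketch with this fixed-arcs Chebyshev argument; as written, (c) is unprovable and the pigeonhole does not go through.
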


Given Theorem \ref{theoS}, the remainder of the proof of Theorem \ref{theo1} is topological. As in \cite{JZ}, we   use an Euler characteristic argument for
embedded graphs, Theorem \ref{topology}, to obtain a lower bound on the number of nodal domains
from the lower bound on the number of boundary zeros.
%For instance, when  $Z_\phi$ is non-singular,  the number of connected %components of $Z_\phi$ essentially gives a lower bound for the number of %nodal domains.
  Heuristically, the nodal lines which touch the
boundary transversally must intersect at two points of the boundary and
trap nodal domains. This is not literally correct when the genus is $ > 0$
but the genus correction is bounded and does not affect the growth rate
of the number of nodal domains. The role of the boundary in \cite{GRS}
and \cite{JZ} was played by the fixed point set of an anti-holomorphic
involution. No  symmetry assumption is necessary in the boundary case, because a nodal segment which has an end point at $\partial M$ must terminate in $\partial M$.

\subsection{Outline of the proof of Theorem \ref{theoS}}

The key step in proving Theorem \ref{theoS} is the following

\begin{proposition} \label{PROP}

Let $(M, g) $ be a Riemannian surface with smooth boundary
and   ergodic billiards.
Then  exists a subsequence of density one of the Neumann eigenfunctions such that, for any fixed arc (or interval) $\beta \subset \partial  M$,
\[
\int_{\beta} |\phi_j | ds >  \left|\int_{\beta} \phi_j ds\right|.
\]
Moreover, there exists a density one subsequence of Dirichlet eigenfunctions such that
for any fixed arc $\beta \subset \partial M $,
\[
\int_{\beta} |\partial_{\nu} \phi_j | ds >  \left|\int_{\beta} \partial_{\nu} \phi_j ds\right|.
\]
\end{proposition}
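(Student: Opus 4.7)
The plan is to reduce the strict inequality $\int_\beta |\phi_j^b|\,ds > |\int_\beta \phi_j^b\,ds|$ to a sign-change statement: strict inequality holds if and only if $\phi_j^b$ takes both signs on $\beta$ on a set of positive measure. The goal is therefore to exhibit a density-one subsequence $A \subset \mathbb{N}$ along which $\phi_j^b$ changes sign on every arc $\beta \subset \partial M$. The three analytic inputs are (a) the quantum ergodic restriction theorem for boundary Cauchy data of \cite{HZ, ctz}, which provides for each fixed arc $\beta$ a density-one subsequence along which
\[ \int_\beta |\phi_j^b|^2\,ds \to \mu(\beta) > 0 \]
in the Neumann case (with the analogous $\int_\beta \lambda_j^{-2}|\partial_\nu\phi_j|^2\,ds \to \mu'(\beta) > 0$ for Dirichlet); (b) the sup-norm estimate $\|\phi_j^b\|_{L^\infty(\partial M)} = o(\lambda_j^{1/2})$ from hypothesis (ii) of Theorem \ref{theo1a}; and (c) a spectral kernel estimate $\sum_{\lambda_j\le\lambda}|\int_\beta \phi_j^b\,ds|^2 = O(\lambda)$.

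The argument proceeds by contradiction: suppose for some fixed arc $\beta$ that the set $J$ of indices with $\phi_j^b$ of constant sign on $\beta$ has positive upper density. After possibly flipping signs, assume $\phi_j^b \ge 0$ on $\beta$ for $j \in J$. For such $j$, the pointwise bound $(\phi_j^b)^2 \le \|\phi_j^b\|_{L^\infty}\cdot \phi_j^b$ on $\beta$, combined with (a) and (b), yields
\[ \left|\int_\beta \phi_j^b\,ds\right| \;=\; \int_\beta \phi_j^b\,ds \;\ge\; \frac{\int_\beta |\phi_j^b|^2\,ds}{\|\phi_j^b\|_{L^\infty}} \;\ge\; \frac{c(\beta)}{\epsilon_j\,\lambda_j^{1/2}}, \]
where $\epsilon_j := \|\phi_j^b\|_{L^\infty}\lambda_j^{-1/2} \to 0$. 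Squaring and summing over $j \in J$ with $\lambda_j \in [\lambda/2, \lambda]$, using Weyl's law $N(\lambda)-N(\lambda/2) \sim c\lambda^2$ and $\epsilon^*(\lambda) := \sup_{j \in J,\, \lambda_j \ge \lambda/2}\epsilon_j \to 0$, produces a lower bound of the form $c''\lambda/\epsilon^*(\lambda)^2$, which exceeds the $O(\lambda)$ bound of (c) for $\lambda$ large --- contradiction. Therefore the set of $j$ where $\phi_j^b$ fails to change sign on $\beta$ has density zero. To extract a single density-one subsequence valid for all $\beta$ simultaneously, we apply the above to a countable dense collection $\{\beta_k\}$ of arcs (say with rational endpoints) and diagonalize; sign change on every $\beta_k$ entails sign change on every arc, since each arc contains some $\beta_k$. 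The Dirichlet case is identical, using $\lambda_j^{-1}\partial_\nu\phi_j$ in place of $\phi_j^b$, the bound $\|\partial_\nu\phi_j\|_{L^\infty} = o(\lambda_j^{3/2})$ from Theorem \ref{SoZ}, and the analogous kernel estimate $\sum_{\lambda_j\le\lambda}|\int_\beta \partial_\nu\phi_j|^2 = O(\lambda^3)$.

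The main obstacle is the kernel bound (c). Writing
\[ \sum_{\lambda_j\le\lambda}\left|\int_\beta \phi_j^b\,ds\right|^2 \;=\; \int_\beta\!\!\int_\beta E_\lambda^{\partial M}(s,t)\,ds\,dt, \qquad E_\lambda^{\partial M}(s,t) := \sum_{\lambda_j\le\lambda}\phi_j(s)\phi_j(t), \]
the on-diagonal contribution is $O(\lambda|\beta|)$ by the local Weyl law for Neumann boundary values, while the off-diagonal contribution requires exploiting the Bessel-type oscillations of the free parametrix kernel $\lambda J_1(\lambda |s-t|)/|s-t|$; integration by parts (or stationary phase) in the tangential variable recovers the order $O(\lambda)$, improving on the pointwise H\"ormander bound $|E_\lambda^{\partial M}(s,t)| \lesssim \lambda^{3/2}/|s-t|^{1/2}$ that would only yield $O(\lambda^{3/2})$. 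Finally, applying QER to characteristic functions $\chi_\beta$ rather than smooth symbols is justified by a standard approximation argument using the absolute continuity of the limit measure on $B^*\partial M$.
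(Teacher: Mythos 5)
Your proposal is correct in substance and rests on the same three pillars as the paper's proof: the boundary QER lower bound $\int f |\phi_j^b|^2\,ds \to \omega_B(f) > 0$, the sup-norm hypothesis $\|\phi_j^b\|_{L^\infty} = o(\lambda_j^{1/2})$, and the Kuznecov estimate $\sum_{\lambda_j \le \lambda}|\langle f,\phi_j^b\rangle|^2 = O(\lambda)$. The paper runs this as a direct argument (Chebyshev gives $|\langle f,\phi_j^b\rangle| \le M\lambda_j^{-1/2}$ on density $\ge 1-c/M$, the QER and sup-norm bounds give $\int f|\phi_j^b| \gg \lambda_j^{-1/2}/o(1)$ on density one, intersect and let $M \to \infty$), whereas you run it as a contradiction on the upper density of indices with no sign change; these are the same estimate read in two ways.

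Two avoidable complications in your version are worth flagging. First, you work with the characteristic function $\chi_\beta$ rather than a smooth bump $f \ge 0$ supported in $\beta$ and positive on its interior. This forces you to re-derive the Kuznecov bound (c) for $\chi_\beta$ by hand — your kernel sketch via near-diagonal Weyl plus off-diagonal oscillation is plausible in spirit, but it is not rigorous as written and is exactly the kind of technicality the paper sidesteps by invoking Theorem \ref{K} for smooth $f$. The inequality for $\chi_\beta$ follows anyway, since $\int f|\phi_j^b| > |\int f\phi_j^b|$ with $f$ as above already forces a sign change on $\beta$; so there is no loss in working with smooth $f$ throughout, and the appeal to "approximation by smooth symbols" for QER becomes unnecessary. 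Second, the dyadic restriction $\lambda_j \in [\lambda/2,\lambda]$ in your summation does not straightforwardly cohabit with "positive upper density": if the upper density $\delta$ of $J$ is small (say $\delta < 1/4$), the count $|J \cap [\lambda/2,\lambda]|$ along the subsequence realizing $\delta$ could fail to be $\gtrsim \lambda^2$. The fix is routine — either sum over $\lambda_j \in (T,\lambda]$ with $T$ fixed and large (so that $\epsilon^*(T)$ is small) or shrink the annulus to $[\eta\lambda,\lambda]$ with $\eta^2 < \delta$ — but as written the step is not quite right. Neither issue reflects a conceptual gap; they are artifacts of choices the paper does not make.
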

That is,
\begin{equation} \label{bnotation}
\int_{\beta} |\phi_j^b | ds >  \left|\int_{\beta} \phi_j^b ds\right|.
\end{equation}

The Proposition clearly implies that the Neumann eigenfunctions $\phi_{\lambda}$ must have a sign-changing zero on any arc $\beta$ for a density one subsequence
of eigenfunctions. Similarly for normal derivatives of Dirichlet eigenfunctions. Theorem
\ref{theoS} is a direct consequence of Proposition \ref{PROP}. By an
`arc' or interval of $\partial M$ we simply mean the image of an interval
under a parametrization.

To prove Proposition \ref{PROP} we combine three results, two of which are
proved elsewhere  and one which we prove here. The first is the quantum ergodic restriction theorem
\eqref{QES} discussed above.
The second ingredient is  the following ``Kuznecov sum formula", extending the result of \cite{z} to manifolds with boundary.  It is an immediate consequence of Theorem 1 of \cite{HHHZ}:

\begin{theorem}[\cite{HHHZ}]\label{K}

Let $(M, g)$ be a Riemannian surface with smooth
boundary $\partial M$. Let $\{\phi_j\}$ be an orthonormal basis of Neumann eigenfunctions.  For any given fixed $f \in C_0^{\infty}(\partial M)$,
\[
\sum_{\lambda_j < \lambda}\left|\int_{\partial M} f \phi_j ds\right|^2 = \left(\frac{2}{\pi} \int_{\partial M} f^2 ds\right) \lambda + o(\lambda).
\]

 Let $\{\phi_j\}$ be an orthonormal basis of Dirichlet eigenfunctions.  For any given fixed $f \in C_0^{\infty}(\partial M)$,
\[
\sum_{\lambda_j < \lambda}\left|\lambda_j^{-1} \int_{\partial M} f \partial_{\nu} \phi_j ds\right|^2 =\left(\frac{2}{\pi} \int_{\partial M} f^2 ds\right)\lambda + o(\lambda).
\]
\end{theorem}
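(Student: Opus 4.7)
My plan is to prove the Kuznecov sum formula by a wave-equation plus Tauberian argument. I focus on the Neumann case; the Dirichlet version follows in the same way once we note that $\lambda_j^{-1}\partial_\nu \phi_j = \partial_\nu(-\Delta_D)^{-1/2}\phi_j$, so the factor $\lambda_j^{-1}$ can be absorbed into a tangential pseudodifferential operator applied to the Dirichlet normal derivative.

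First, I introduce a smoothed counting function
\[
A_f(\lambda) := \sum_j \rho(\lambda-\lambda_j)\,\Bigl|\int_{\partial M} f\,\phi_j\,ds\Bigr|^2,
\]
where $\rho\in\mathcal{S}(\mathbb{R})$ has $\widehat{\rho}\in C_0^\infty(\mathbb{R})$ supported in a small neighborhood of $0$ that avoids nonzero lengths of generalized billiard trajectories from $\partial M$ back to itself. By Fourier inversion, $A_f(\lambda)$ equals $(2\pi)^{-1}\int\widehat\rho(t)\,e^{it\lambda}F(t)\,dt$, where $F(t)=\int_{\partial M\times\partial M} f(x)\overline{f(y)}\,K(t,x,y)\,ds(x)\,ds(y)$ and $K(t,x,y)$ is the Schwartz kernel of $e^{-it\sqrt{-\Delta_N}}$ double-restricted to $\partial M\times\partial M$. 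To analyze the $t\to 0$ singular structure of $K$ I would use either the Melrose--Taylor diffractive parametrix on $M$ with Neumann condition, or the doubling trick, in which $(M,g)$ is reflected across $\partial M$ to form a closed surface $\tilde M$ and the classical Hadamard parametrix is invoked on $\tilde M$. Under doubling, Neumann eigenfunctions of $M$ correspond (up to a factor of $\sqrt 2$) to even eigenfunctions on $\tilde M$, so the boundary sum becomes a standard closed-manifold Kuznecov sum along the smooth embedded curve $\partial M\subset\tilde M$. Stationary phase in the fiber variable conormal to $\partial M$ reduces the $t=0$ contribution to a scalar oscillatory integral against $|f|^2 ds$, from which the leading $O(\lambda)$ behavior of $A_f(\lambda)$ can be read off.

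The explicit constant $\frac{2}{\pi}$ is extracted from a flat half-space model. Writing Neumann half-space eigenfunctions as $\cos(\xi_2 x_2)e^{ix_1\xi_1}$ and integrating the squared boundary Fourier coefficients $|\widehat f(\xi_1)|^2$ over the Fourier shell $|\xi|<\lambda$ produces $\frac{1}{\pi}\lambda\int f^2$ for the classical closed-surface Kuznecov constant along a smooth curve, doubled to $\frac{2}{\pi}\lambda\int f^2$ in the Neumann boundary setting because the boundary trace of a Neumann eigenfunction on $M$ carries an extra factor of $\sqrt 2$ relative to the corresponding even eigenfunction on $\tilde M$. For Dirichlet, the parallel half-space computation yields $\sum_{\lambda_j<\lambda}\bigl|\int f\,\partial_\nu\phi_j\,ds\bigr|^2\sim\frac{2\lambda^3}{3\pi}\int f^2$, and Abel summation against the weight $\lambda_j^{-2}$ multiplies this by a combinatorial factor of $3$ after dividing out the two extra powers of $\lambda$, recovering the same constant $\frac{2}{\pi}$.

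I would finish by applying a standard Tauberian theorem to pass from the smoothed asymptotic of $A_f(\lambda)$ to the sharp cutoff with $o(\lambda)$ remainder. The main obstacle, in my view, is not the leading-order computation but the control of the double-restricted kernel near the glancing set $\{(x,\xi')\in T^*\partial M:|\xi'|_g=\lambda\}$, where the restriction of the half-wave propagator becomes a Fourier integral distribution with degenerate symbol, and where the doubled metric fails to be smooth across $\partial M$; fortunately, for an $o(\lambda)$ remainder (as opposed to a sharp Weyl asymptotic) the glancing contribution is negligible and no dynamical hypothesis is needed. This argument is carried out in considerably greater generality in Theorem 1 of \cite{HHHZ} for Cauchy data along arbitrary hypersurfaces on manifolds with boundary, and Theorem \ref{K} is the special case of that theorem with hypersurface equal to $\partial M$.
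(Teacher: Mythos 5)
Your overall strategy — smooth the spectral sum with $\rho$, reduce to the $t=0$ singularity of the double boundary restriction of the wave kernel, compute by stationary phase, then apply a Tauberian theorem — matches the paper's, and you correctly identify glancing rays as the central technical obstacle. However, the route you actually develop, the doubling trick, has a real gap. As you yourself note, the doubled metric on $\tilde M$ is smooth across $\partial M$ only when the boundary is totally geodesic; in general (and in particular for a concave boundary) the metric in Fermi coordinates is $h_{ij}(|r|,y)\,dy^idy^j + dr^2$, which is Lipschitz but not $C^1$. The classical Hadamard parametrix requires smooth coefficients, and the set where smoothness fails is exactly the doubled copy of $\partial M$ — precisely where your double restriction and your $t\to 0$ diagonal singularity live. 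So the non-smoothness cannot be dismissed as ``part of the glancing contribution''; it contaminates the whole parametrix construction at the place you need it. The eigenfunction correspondence (even extension of Neumann eigenfunctions) does hold at the level of the quadratic form, but without a smooth parametrix you cannot read off the on-diagonal asymptotics of the boundary-restricted wave kernel.

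The paper sidesteps this by a different decomposition. After the pseudodifferential cutoff $\chi(q,D_t,D_q)$ away from tangential directions (whose removal is justified by a wave front set computation showing the tangential piece contributes nothing conormal over $t=0$), the non-tangential part of $E^b_B(t,q,q')$ is identified, via the calculus of Fourier integral operators, with $A(q,D_q)\gamma^B_q\gamma^B_{q'}E_X(t,q,q')\chi$, where $E_X$ is the \emph{ambient} wave kernel of the closed smooth surface $(X,g)$ from which $M$ is carved. Because $E_X$ lives on a genuinely smooth closed manifold, the Hörmander small-time parametrix applies without issue, and a two-step stationary phase (first in $(t,\rho)$, then in $(y,\omega)\in\partial M\times S^{n-1}_-$ with Hessian block $\begin{pmatrix}-\mathrm{II}_q& I\\ I&0\end{pmatrix}$) gives the constant term $\|f\|^2_{L^2(\partial M)}$. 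The constant $2$ in the principal symbol (and hence the $\frac{2}{\pi}$) is then taken from Lemma 10 (quoted from \cite{HeZ,HHHZ}) rather than from a flat half-space model. Your Abel-summation arithmetic for the Dirichlet weight $\lambda_j^{-2}$ is fine, but the phrase ``absorbed into a tangential pseudodifferential operator'' is imprecise, since $\partial_\nu(-\Delta_D)^{-1/2}$ is not a $\Psi$DO on $\partial M$; the paper instead treats the factor by working directly with the normalized Cauchy data $\phi_j^b$ throughout.
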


We prove Theorem \ref{K} in \S \ref{KUZSECT}.  We view Theorem \ref{K} as an asymptotic mean formula for a sequence
of probability measures, stating that on average, $ \left|\int_{\partial M} f \phi_j^b ds\right|^2$ is of order
$\lambda_j^{-1}$.
An application of Chebychev's inequality then gives
\begin{corollary}\label{cor81}
There exists a constant $c=c_f>0$ depending only on $f$ such that, for each $M > 0$ there exists a subsequence of proportion $\geq 1 - \frac{c}{M}$ of the $\{\phi_j\}$ for which
\[
 \left|\int_{\partial M} f \phi_j^b ds  \right| \leq M \lambda_j^{-\frac{1}{2}}
\]
\end{corollary}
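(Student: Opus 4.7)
The plan is to apply Chebyshev's inequality directly to the Kuznecov formula of Theorem \ref{K}. Write $a_j := \left|\int_{\partial M} f \phi_j^b \, ds\right|^2$ and set $C_f := \tfrac{2}{\pi}\int_{\partial M} f^2 \, ds$, so that Theorem \ref{K} reads
\[
\sum_{\lambda_j < \lambda} a_j \;=\; C_f\, \lambda \;+\; o(\lambda), \qquad \lambda \to \infty.
\]

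The inequality $\left|\int_{\partial M} f \phi_j^b \, ds\right| > M \lambda_j^{-1/2}$ is equivalent to $a_j \lambda_j > M^2$. Since $\lambda_j < \lambda$ implies $a_j \lambda_j < \lambda\, a_j$, a one-line Chebyshev estimate combined with Theorem \ref{K} yields
\[
\#\bigl\{\, j : \lambda_j < \lambda,\; a_j \lambda_j > M^2 \,\bigr\} \;\leq\; \frac{\lambda}{M^2}\sum_{\lambda_j < \lambda} a_j \;=\; \frac{C_f\, \lambda^2}{M^2} \;+\; o(\lambda^2).
\]
By the Weyl law for the Dirichlet (resp.\ Neumann) Laplacian on the surface, $N(\lambda) := \#\{j : \lambda_j < \lambda\} \sim \tfrac{\vol(M)}{4\pi}\, \lambda^2$; dividing, the upper density of the exceptional set of indices $j$ is at most $\tfrac{4\pi C_f}{\vol(M)}\cdot \tfrac{1}{M^2}$. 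For $M \geq 1$ this is bounded by $c_f/M$ with $c_f = \tfrac{4\pi C_f}{\vol(M)}$; for $0 < M < 1$ the conclusion is vacuous after enlarging $c_f$ so that $c_f/M \geq 1$ there. This gives the stated subsequence of proportion $\geq 1 - c_f/M$.

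There is no genuine obstacle in the argument: Theorem \ref{K} does the analytic work, and the passage from the mean estimate to a density statement is a textbook Chebyshev + Weyl computation. The only small bookkeeping concern is that \emph{proportion} of the $\{\phi_j\}$ is interpreted with respect to the index $j$; this is precisely what Weyl's law converts from the $\lambda^2$ count produced by Chebyshev. Note that, in fact, the computation above produces a bound of order $1/M^2$, which is strictly stronger than the $1/M$ appearing in the statement and is what one would use downstream when assembling the density-one subsequence in Proposition \ref{PROP} via a diagonalization over a countable collection of arcs $\beta$.
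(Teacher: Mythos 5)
Your proof is correct and follows essentially the same route as the paper: Chebyshev/Markov applied to the Kuznecov sum formula of Theorem \ref{K}, then normalization by the two--dimensional Weyl law to convert the $\lambda^2$-scale count into a density statement. The only cosmetic difference is that the paper works in dyadic windows $\lambda < \lambda_j < 2\lambda$ (so $\lambda_j$ and $\lambda$ are comparable), whereas you use the sharp cutoff $\lambda_j < \lambda$ together with the crude but adequate bound $a_j\lambda_j < a_j\lambda$; both give the same upper density. Your observation that the argument actually delivers an exceptional-set density of order $1/M^2$ (which the paper also implicitly obtains, since its parameter plays the role of your $M^2$) is accurate, and the corollary is simply stated with the weaker $1/M$ bound because that is all that is needed later.
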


%We only use Corollary \ref{cor81} in the proof of Theorem \ref{theo1}.

The next ingredient is the quantum ergodicity of  Cauchy data of ergodic sequences of eigenfunctions along the boundary.
%We use the following notation:
%\bigskip
%\begin{definition}
%The Cauchy data of Neumann (resp. Dirichlet) eigenfunctions of $(M,g)$ is %defined by
%$$\left\{ \begin{array}{ll}  \phi_j^b = \phi_j |_{\partial M}, & \; \mbox{ Neumann %boundary conditions}, \\ \\
%\phi_j^b =
%\lambda_j^{-1} \partial_{\nu} \phi_j |_{\partial M}, & \; \mbox{Dirichlet boundary %conditions}, \end{array} \right.
%$$
%\end{definition}
 In \cite{HZ,ctz} (see also
\cite{Bu}) it is proved that if the geodesic billiard flow of a Riemannian manifold $(M, g)$ is
ergodic, then there exists a subsequence $A\subset \mathbb{N}$ of density one so that,
for any $f \in C(M)$,  \begin{equation}\label{QES} \begin{gathered}
\lim_{\substack{j \to \infty\\ j \in A}} \int_{\partial M} f |\phi_j^b|^2 d s
= \omega_B(f),
%= \text{ Neumann}, \\
%\lim_{\substack{j \to \infty\\ j \in A}} \int_{\partial M} f |\phi_j^b|^2 d s  = \omega_{B}(f), \quad B = \text{ Dirichlet},
\end{gathered}
\end{equation}
where $\omega_B(f)$ is a `limit state' (i.e. a positive measure viewed as a linear functional on $C(B^* \partial M)$)
which depends on the boundary condition B. The limit state  is defined in \S \ref{QER} and the result is stated in Theorem \ref{main}.
We refer to the density one sequence \eqref{QES} as a ``boundary ergodic sequence"  of eigenfunctions.

We now put together   Theorem  \ref{SoZ},
 Theorem \ref{K}, and \eqref{QES} to given an outline of the proof Proposition \ref{PROP}.
For simplicity we assume that the boundary conditions are Neumann.
From the third assumption in Theorem \ref{theo1}, we have
$$||\phi_{j} |_{\partial M}||_{L^{\infty}}\leq \lambda_j^{1/2} o(1). $$

We then prove that for any $M > 0$ there exists a subsequence of density $\geq 1 - \frac{1}{M}$ so that for
any arc $\beta \subset H$,
$$|\int_{\beta} \phi_j ds| \leq C \lambda_j^{-1/2}  M$$
and there exists a subsequence of density one for which
$$\int_{\beta} |\phi_j | ds \geq ||\phi_j ||_{C^0(\beta)}^{-1} ||\phi_j ||_{L^2(\beta)}^2
\geq C \lambda_j^{-1/2} \frac{1}{o(1)}, $$
This is a contradiction if $\phi_j$ has no sign change on $\beta $.

It follows that for any $M > 0$ there exists a subsequence of density $\geq 1 - \frac{1}{M}$ for which
$$\int_{\beta} |\phi_j | ds > |\int_{\beta} \phi_j ds|. $$
This implies the existence of a subsequence of density one with this property.

\subsection{Related and future work}

For the sake of completeness, we  recall that in \cite{JZ}  it is  proved  that $N(\phi_{\lambda})$
tends to infinity along a density one sequence of even or odd eigenfunctions  on
a  Riemannian surfaces $(M, g, \sigma)$ of negative curvature with an isometric orientation reversing involution $\sigma$ with separating fixed point set
 $\rm{Fix}(\sigma)$.   In \cite{JZ}, the assumption of the existence of $\sigma$ is necessary in order to ensure that there are at most two intersections between $\rm{Fix}(\sigma)$ and a closed nodal curve, which allows one to relate the number of nodal domains and number of intersections.

We also  conjecture that Theorem \ref{theo1a}  generalizes to all hyperbolic billiards, such as
planar billiard tables with concave walls and corners, and also  to the Bunimivoch
stadium (with a continuous tangent line). The only part of the proof
which is not contained in this article is the validity of
the conclusion of Theorem  \ref{SoZ}  on such manifolds with corners.
The corners are serious
complications   to the proof of Theorem \ref{SoZ} since sequences of eigenfunctions
might be exceptionally large there.  However for the
purposes of this article,    sup norm estimates
on these manifolds are only needed away from the corners, and it may be possible
to prove the necessary bounds without proving all of Theorem \ref{SoZ}
in the cornered case.

We also conjecture that Theorem  \ref{SoZ} should be true for general Riemannian manifolds with smooth boundary with
no boundary self-focal points. This is work in progress of the second author with C. Sogge.
If we can generalize Theorem \ref{SoZ} to any smooth boundary with
no boundary self-focal points,
then the condition (ii) in Theorem \ref{theo1a} can be replaced by
the purely dynamical condition,  [(ii)'] There does not exist a boundary {\it self-focal} point $q \in \partial M$ for the billiard flow, and one would have
 purely dynamical conditions (i)-(ii)' ensuring growth of numbers of nodal
domains (as in the special case of  Theorem \ref{theo1}). One may further
ask if ergodicity itself prohibits existence of self-focal points on the boundary,
at least in the real analytic case. As observed in \cite{SoZ}, real analytic surfaces without
boundary and ergodic geodesic flow cannot have self-focal points $p$ because
the flowout of $S^*_pM$ under the geodesic flow is an invariant Lagrangian
torus enclosing a positive measure invariant set. But in the boundary case,
the flowout of the inward pointing $S^*_{p, \rm{in}} M$ at $p \in \partial M$
might not bound an invariant set.

The intersection points $
Z_{\phi_{\lambda}} \cap \partial M$ in the Neumann case are the points where the nodal  set touches the boundary in the sense of \cite{TZ2}.
That article shows that if $\partial M$ is  piecewise real analytic, then the number of intersection points is $\leq C_{M} \lambda$ for some constant
depending only on the domain. In theorem \ref{theoS},  we only show that
the number of intersection points tends to infinity in our setting. It would
be very interesting to have a quantitative lower bound.
For  Sinai-Lorentz billiards we conjecture that the sup norms of
the Cauchy data  are of order  $O(\frac{\lambda_j^{\frac{n-1}{2}}}{\sqrt{\log \lambda_j}})$. This would be a key step in producing zeros in intervals
of the boundary of lengths $\frac{1}{\sqrt{\log \lambda_j}}$, thereby
producing  logarithmic lower bounds on numbers of nodal domains.

\subsection{Acknowledgements}

This paper makes use of recent joint work of the second author with several
collaborators, which in part was motivated by the applications to nodal sets. Theorem \ref{K} is recent joint work with X. Han, A. Hassell and H. Hezari \cite{HHHZ}. It also
uses calculations in recent  work  \cite{HeZ} with H. Hezari.As mentioned above, Theorem \ref{SoZ} is joint work  with  C. D. Sogge
\cite{SoZ3}.   The
boundary quantum ergodicity theorem and boundary local Weyl law is joint work with A. Hassell \cite{HZ} and  with H. Christianson and J. Toth \cite{ctz}.  We would also like to thank N. Simanyi for helpful correspondence
regarding \cite{KSS} and L. Stoyanov for correspondence on billiard problems.

The first author was supported by the National Research Foundation of Korea(NRF) grant funded by the Korea government(MSIP)(No. 2013042157). The first author was also partially supported by NSF grant DMS-1128155 and by TJ Park Post-doc Fellowship funded by POSCO TJ Park Foundation. Research of the second author was partially supported by NSF grant DMS-1206527.

\section{ Billiards on negatively curved surfaces exterior to convex obstacles }\label{BILLIARDS}

Before discussing billiards on these surfaces, we introduce some general notation and background.

\subsection{Billiard map}\label{BMAP}

We  denote by \begin{equation} \label{GT} \Phi^{t}:
S^{*} M
  \rightarrow S^*M\end{equation}  the billiard   (or broken geodesic)   flow
 on
  $S^*M$. The trajectory $\Phi^{t}(x, \xi)$ consists of geodesic  motion
  between impacts with the boundary, with  the usual reflection
  law at the boundary.
%Thus, $\Phi^t(x, \xi) = (x_t, \xi_t)$ is the
%solution of the initial value problem for the geodesic equation with initial conditions $(x, \xi)$.

The   billiard map $\beta$
is defined on the unit ball bundle $B^*\partial M$ of $\partial M$  as follows: given $(y, \eta)
\in B^*
\partial  M$, i.e.  with $|\eta| < 1$, we let $(y, \zeta) \in S^*
M$ be the unique inward-pointing unit covector at $y$ which
projects to $(y, \eta)$ under the map $T^*_{\partial M}
\overline{M} \to T^* \partial M$.
 Then we follow the geodesic  \begin{equation} \label{EDEF} (y, \eta) \in B^* (\partial M) \to
    \Phi^t(q(y),  \zeta(y, \eta))  \end{equation} until the projected billiard orbit  first
intersects the boundary again; let $y' \in
\partial M $ denote this first intersection.
We denote  the inward unit normal vector at $y'$ by $\nu_{y'}$,
and  let $\zeta' = \zeta + 2 (\zeta \cdot \nu_{y'}) \nu_{y'}$ be the
direction of the geodesic after elastic reflection at $y'$. Also,
let $\eta'$ be the projection of $\eta'$ to $T^*_{y'}\partial
M$. Then  \begin{equation} \label{BETA}
\beta(y, \eta) := (y', \eta'). \end{equation}
%Finally, we let $v^T$ denote the tangential projection
%of $v \in S^*_q M$ at $q \in \partial M.$

The directions tangent to $\partial M$ cause singularities (discontinuities) in $\beta$, and
the billiard map is not apriori well-defined on initial
directions which are tangent to $\partial M$, i.e. $(y, \eta)
\in S^*
\partial M$ with $|\eta| = 1$.  To obtain a smooth dynamical system, one often  removes the tangential directions $S^* \partial M$.
 Billiard trajectories
starting on $\partial M$ in non-tangential directions may  become tangential
at some future intersection and one often punctures out all trajectories which in the past or future become
tangential.
That is, one defines $B_q^0 = B^*_q \partial M \backslash S^*_q \partial M$ to be the projections of non-tangential directions
and define
$\mathcal{R}^1 \subset B_q^0 $ to be $\beta^{-1}(B_q^0 )$ and, more generally, $\mathcal{R}^{k+1} = \beta^{-1}(\mathcal{R}^{k})$ for natural numbers $k$. Thus $\mathcal{R}^k$ consists of the points where $\beta^k$ is well defined and maps to $B_q^0 $.
Similarly one defines $\mathcal{R}^{-1} = \beta_-^{-1}B_q^0 $ and $\mathcal{R}^{-k-1} = \beta_-^{-1}(\mathcal{R}^{-k})$.
Clearly $\mathcal{R}^1 \supset \mathcal{R}^2 \supset \dots$, $\mathcal{R}^{-1} \supset \mathcal{R}^{-2} \supset \dots$ and it is shown in \cite{CFS} that each $\mathcal{R}^k$ has full measure. Then $\beta$ is a symplectic diffeomorphism
of  $$\mathcal{R}^\infty = \bigcap_k \mathcal{R}^k. $$   However,  in this
article we
define $\beta$ at tangent vectors to $\partial M$ so that the billiard trajectory is simply a geodesic of $(X,g)$
which hits $\partial M$ tangentially. The billiard map is then defined on all of $B^* \partial M$ but is discontinuous
along the set of tangential directions.

\subsection{Ergodicity of the billiard map for non-positively curved surfaces with concave boundary}

We need the following result of Kramli-Simonyi-Szasz \cite{KSS}.
\begin{proposition}[\cite{KSS}]\label{E}
Billiards on a  non-positively curved surface with concave boundary are ergodic.
\end{proposition}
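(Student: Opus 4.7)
The plan is to prove ergodicity by establishing uniform hyperbolicity of the billiard flow and then invoking the Sinai--Chernov local ergodic theorem. The background strategy is that on a non-positively curved surface the geodesic flow is already nonuniformly hyperbolic, while concave (dispersing) boundaries provide additional strict expansion at each reflection. Together, these should yield a hyperbolic system to which the Hopf/Sinai machinery for multidimensional billiards applies.

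The first step I would carry out is the construction of a continuous, invariant unstable cone field $C^u \subset T(B^*\partial M)$ on the section of $\beta$. Between reflections, the geodesic flow on $(X,g)$ with $K\le 0$ preserves the cone of positive Jacobi fields, which one sees from the scalar Jacobi equation $J'' + KJ = 0$ and the standard fact that the Riccati operator $u=J'/J$ remains non-negative along geodesics when $K\le 0$. At a reflection off a geodesically concave obstacle one combines this with the mirror/reflection formula (the curvature of $\partial M$ with respect to the inward normal is strictly negative), which adds a strictly positive term to the Riccati update. The composition of a non-contracting free-motion update with a strictly expanding reflection update yields a strictly invariant cone and hence a uniform expansion rate in the $\beta$-Jacobian restricted to $C^u$.

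Second, I would derive nonzero Lyapunov exponents on the set $\mathcal{R}^\infty$ of orbits that never become tangent to $\partial M$ via the Wojtkowski criterion, and then invoke Pesin theory to obtain a measurable partition of $B^*\partial M$ into ergodic components of positive measure, equipped with absolutely continuous stable and unstable foliations. At this point the standard Hopf argument applied to Birkhoff averages of continuous functions shows that ergodic components are locally characterized by the partition into global stable and unstable manifolds.

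The main obstacle --- and the heart of the KSS paper --- is upgrading this local/Pesin ergodicity to global ergodicity in the presence of the singularity set $\mathcal{S}=\partial(\beta$-domain$)$ coming from orbits that become tangent to $\partial M$. The key tool is the Chernov--Sinai fundamental theorem (local ergodicity theorem): one must verify (a) the proper alignment / transversality of $\mathcal{S}$ with the unstable cone field, (b) the Katok--Strelcyn regularity of $\mathcal{S}$, and (c) the Chernov--Sinai ansatz controlling the measure of points whose forward orbits hit $\mathcal{S}$ before the hyperbolicity has had time to dominate, typically of the form $\mu(\{x : d(\beta^n x, \mathcal{S}) < e^{-\alpha n}\})$ being summable. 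Because the obstacles $\mathcal{O}_j$ are strictly convex and the background curvature is non-positive, the expansion rates on $C^u$ grow uniformly in a quantitative way and the singularity manifolds remain stably transverse to $C^u$; verifying these conditions in the curved-ambient setting, handling the iterated preimages $\mathcal{R}^{-k}$, and then piecing together the resulting local ergodic neighborhoods by a connectivity argument (using the non-positive curvature to connect sufficient-history neighborhoods) is the technically demanding portion I would expect to occupy the bulk of the argument. Once local ergodicity holds on a set of full measure and connectedness is established, ergodicity of the flow $\Phi^t$ on $S^*M$ follows from ergodicity of the Poincar\'e section map $\beta$ by standard suspension arguments.
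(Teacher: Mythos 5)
Your proposal reconstructs the \emph{internal} architecture of the Kr\`amli--Sim\'anyi--Sz\'asz (KSS) argument (invariant unstable cone fields and Wojtkowski's criterion $\to$ nonzero Lyapunov exponents and Pesin theory $\to$ Chernov--Sinai local ergodicity with the alignment/regularity/ansatz conditions $\to$ Hopf--Sinai global argument), and the mechanics you describe --- the Riccati variable $u=J'/J$ staying nonnegative under $K\le 0$, the strictly expanding reflection update $u \mapsto u + 2\kappa/\sin\phi$ off a dispersing wall, and the local-to-global upgrade in the presence of the tangency singularity set --- are indeed the ingredients inside the KSS paper. The paper, however, does not re-prove any of this machinery. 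Its ``proof'' is a \emph{reduction}: it observes that the desired ergodicity is exactly the content of KSS's Theorem 5.1 combined with their Section 6, and that the stated setting satisfies Vetier's hypotheses (1.2)--(1.4) of KSS. Concretely, (1.2) (a uniform lower bound $\tau_{\min}$ on the inter-collision time) holds because the surface is compact and the obstacles are disjoint; (1.4) is a curvature inequality automatically satisfied when $K\le 0$; and --- this is the one nontrivial observation --- (1.3) (finite horizon, $\tau_{\max}<\infty$) can be dispensed with precisely because $K\le 0$, since the ambient geodesic flow then provides the expansion that a bounded free flight would otherwise guarantee. What the paper's route buys is economy and precision about \emph{which} hypotheses of the cited theorem must actually be checked, and in particular the identification of the finite-horizon condition as the one that must be handled; your route explains \emph{why} a theorem of this kind should be true, but never identifies Vetier's conditions or verifies them, so it would not by itself establish that the KSS machinery applies to the class of surfaces in the proposition --- it would instead require you to re-prove the KSS fundamental theorem from scratch, which is a much larger undertaking than what is needed here.
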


Since the result is not stated this way in \cite{KSS},  we briefly review the proof.
  The main result of the \cite{KSS} is a proof of the
``Fundamental Theorem for Dispersing Billiards"   (Theorem 5.1).  The
``dispersing billiards"  condition implies the existence
almost everywhere of stable/unstable foliations for the billiard map and also
indicates some of its quantitative properties.
   In section 6 of
\cite{KSS} the authors show how ergodicity of the billiards follows from their Theorem 5.1 by the so-called Hopf-Sinai argument.

 The surfaces in \cite{KSS}  are assumed to be the exterior
\eqref{M} of a finite union of convex obstacles (i.e. the boundary curves have strictly positive geodesic curvature
from the inside). They  are more general than the  non-positively curved surfaces assumed  here.  The billiards
are  only assumed to satisfy `Vetier's conditions',  which are conditions implying that `no focal points arise'.  The conditions
are stated precisely in Condition 1.2-1.4 in \cite{KSS}.  Condition (1.2) is that the distance between obstacles
is bounded below by some $\tau_{\min} > 0$, which is obvious for a compact surface when the obstacles do not
intersect. Condition (1.3) is that there exists $\tau_{\max}$ so that any geodesic must intersect $\partial M$
in time $\leq \tau_{\max}$. This condition can be removed if the curvature is strictly negative.  Condition (1.4) is a curvature condition which is satisfied as long as $K \leq 0$.
In this case, Condition (1.3) becomes irrelevant. In fact, $K \leq 0$ along implies the fundamental theorem
and ergodicity of the billiard flow.

For $(x, \xi) \in S^*_x M$, the stable (resp. unstable) fiber  $H^{(s)}(x, \xi)$
(resp. $H^{(u)}(x, \xi)$)  through $x$ is the set of $(y, \eta) \in S^* M$ so that
\[
\lim_{t \to +\infty} d(\Phi^t(y, \eta), \Phi^t(x, \xi)) =0
\]
(resp. $t \to - \infty$).  The stable leaf through $x$ is $\bigcup_{t \in \mathbb{R}} \Phi^t(H^{(s)}(x)). $
Similarly for the unstable leaf.  Vetier proved that under the conditions above, there exist stable and unstable fibers
through almost $(x, \xi) \in S^* M$ which are $C^1$ curves. It follows that through almost every $(q, \eta) \in B^* \partial M$
there exist stable/unstable leaves for the billiard map, which are $C^1$ curves. In particular, this is the case
for non-positively curved surfaces with concave boundary.

\subsection{Absence of self-focal points  non-positively curved surfaces with concave boundary}
The follow Lemma lets us use Theorem \ref{SoZ} in Theorem \ref{theo1a}.
\begin{lemma} \label{NFP}
There do not exist partial self-focal points on a non-positively curved surfaces with concave boundary.
\end{lemma}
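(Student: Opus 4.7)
The plan is to argue by contradiction, reducing the non-existence of partial self-focal points to the absence of conjugate points along broken geodesics of the dispersing billiard, and deducing the latter from the combination of non-positive curvature $K\leq 0$ in the interior with the inward-focusing (equivalently, dispersing) reflection law at a concave boundary.

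First I would reduce ``$q$ is partial self-focal'' to ``$q$ is self-conjugate.''  For $\xi\in\mathcal{L}_q$, let $T(\xi)$ be the first return time of $\Phi^t(q,\xi)$ to $q$ and set
\[
\Psi(\xi)=\pi\Phi^{T(\xi)}(q,\xi),
\]
defined and smooth off the measure-zero set of directions whose forward orbit strikes $\partial M$ tangentially. Since $\Psi\equiv q$ on the positive-measure set $\mathcal{L}_q$, at any Lebesgue density point $\xi_0\in\mathcal{L}_q$ the differential $d\Psi(\xi_0)$ must vanish. Standard variation-of-geodesics calculus identifies $d\Psi(\xi_0)\cdot\eta$ with the terminal value of the broken Jacobi field $J_\eta$ along the loop $\gamma_{\xi_0}$ with initial data $J_\eta(0)=0$ and $J_\eta'(0)=\eta$; so the vanishing of $d\Psi(\xi_0)$ produces a nontrivial broken Jacobi field along $\gamma_{\xi_0}$ vanishing at both endpoints, i.e.\ $q$ is conjugate to itself along $\gamma_{\xi_0}$.

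Second, I would rule out such a broken Jacobi field in the present geometry. Between consecutive reflections, $J$ solves the scalar Jacobi equation on a geodesic segment of $(X,g)$ with $K\leq 0$, and the usual Riccati comparison shows that $|J|$ is convex in arclength, so $J$ cannot vanish twice inside a single segment. At each reflection point $y\in\partial M$, concavity of $\partial M$ (equivalently, geodesic convexity of the obstacles $\mathcal{O}_j$) is precisely the dispersing-billiard condition used in \cite{KSS}: the jump across the reflection contributes a non-negative additive term to the Riccati variable, as recorded in the wavefront/Jacobi-field formalism underlying the Fundamental Theorem for dispersing billiards. Consequently the Riccati quantity governing spreading of nearby trajectories is monotone non-decreasing along the entire broken loop, and $J$ can vanish at most once. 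This contradicts the self-conjugacy produced in the first step.

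The main obstacle lies in the second step, and within it in the bookkeeping at reflections: one needs the precise formula relating $J$ and $J'$ on either side of a reflection, together with a careful check that the sign convention for ``concave boundary'' in the hypothesis matches the ``dispersing'' convention of \cite{KSS}. Once this alignment is recorded, the absence of conjugate points along broken orbits, and hence of partial self-focal points on $\partial M$, follows at once.
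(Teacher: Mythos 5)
Your overall strategy is the same as the paper's: reduce the non-existence of partial self-focal points to the absence of conjugate points along broken billiard loops, and rule out the latter by combining the convexity of Jacobi fields in $K\leq 0$ with the dispersing jump condition at a concave boundary. Your Step~2 is essentially correct, and the sign-convention check you flag there is a real but routine issue. The genuine gap is in Step~1.

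The map $\Psi(\xi)=\pi\Phi^{T(\xi)}(q,\xi)$ is \emph{not} ``defined and smooth off the measure-zero set of tangential directions.'' The first return time $T(\xi)$ to the specific point $q$ is defined only for $\xi\in\lcal_q$ itself (for a direction $\xi$ not in $\lcal_q$ the orbit never returns to $q$), so $\Psi$ is a constant map on the set $\lcal_q$ and nothing more; there is no smooth extension to a neighborhood of a density point, and $d\Psi(\xi_0)$ is not defined. You appear to be conflating the first return to $q$ with the first return to $\partial M$: the latter is smooth away from tangential directions, but it is not constant on $\lcal_q$, so the density-point trick does not apply to it directly either. To repair this you must first decompose $\lcal_q$ by the number of impacts and by the symbolic itinerary $B=(b_0,\dots,b_n)$ of obstacles hit; on each open coding cell $S_B$ the $n$-fold iterated boundary return map $\Psi_B$ \emph{is} smooth, $\Psi_B\equiv q$ on $\lcal_q\cap S_B$, and if that intersection has positive measure (or merely an accumulation point) a nontrivial broken Jacobi field vanishing at both ends is produced. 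This decomposition is precisely what the paper's proof does --- it fixes a finite itinerary $B$, shows that the set of $\eta\in S_B$ landing on a given $q'$ after $|B|$ impacts is finite (else one gets a conjugate point), and then, passing to the universal cover and enumerating itineraries, concludes that $\lcal_q$ is in fact \emph{countable}, which is stronger than the measure-zero statement you aim for. So the missing ingredient in your write-up is exactly the coding decomposition; without it the differential $d\Psi(\xi_0)$ you want to take does not exist.
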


To prove this, we consider broken  Jacobi fields for the billiard flow and begin with some background from \cite{W,ZL,B}.
  A  (normal) Jacobi field
is an orthogonal vector field $J(t) $  along a billiard trajectory  $\gamma$ with transversal reflections at $\partial M$,
 which satisfies the Jacobi equation $\frac{D^2}{dt^2} J +
K(\gamma(t)) J = 0 $ away from the elastic impacts, and which is reflected by the law
$$\begin{pmatrix} J \\  J' \end{pmatrix} \to \begin{pmatrix} -1 & 0 \\ & \\
\frac{2 K(s)}{\sin \phi(s)} & -1 \end{pmatrix} \begin{pmatrix} J \\  J' \end{pmatrix} = \begin{pmatrix} - J \\
\frac{2 K(s)}{\sin \phi(s)} J - J' \end{pmatrix} $$
at the reflection point. Here $\phi$ is the angle that $\gamma'(t)$ makes with the boundary at the impact time.
Recalling that a Jacobi field is the variation vector field $J(t) = \frac{\partial}{\partial \epsilon} \gamma_{\epsilon}(t)$
of a 1-parameter family of billiard trajectories, we see that the reflection law is the derivative in $\epsilon$ of the
reflection law for the curves $\gamma_{\epsilon}$.

\begin{proof}
Assume first that $Y$ is a simply connected non-positively curved surface, and that $\ocal_1,\cdots,\ocal_m \subset Y$ are disjoint obstacles. Fix a point $q \in \partial \ocal_1$ and consider billiard trajectories of $(q, \eta)$ on $Y \backslash \bigcup_{j=1}^m \ocal_j$. For a given billiard trajectory of $(q,\eta)$, we correspond a sequence $\{a_j(\eta)\}_{j\geq 0}$ with $1 \leq a_j(\eta)\leq m$ such that $j$-th impact occurs on the boundary of $\ocal_{a_j}$ (we assume that $a_0(\eta)=1$.)

For a given sequence $B=\{b_j\}_{0\leq j\leq M}$ with $1 \leq b_j \leq m$, let
\[
S_B:=\{\eta ~|~ a_j(\eta)=b_j,~j=0,\cdots,M\}.
\]
Let $q_j(\eta)\in \partial \ocal_{a_j(\eta)}$ be the $j$-th impact point. Since $Y$ is simply connected, the length of billiard trajectory from $q$ to $q_M(\eta)$ is bounded from above by some constant for $\eta \in S_B$. Therefore, if there are infinitely many $\eta \in S_B$ such that $q_M(\eta)=q'$ with same $q'$, then $q$ and $q'$ are conjugate. However, this is impossible, since the norm of the  Jacobi field is monotonically increasing between impacts
and only grows at an impact. Hence along any billiard trajectory from any $q \in \partial M$, it cannot vanish for $t > 0$.

Now let $\tilde{X}$ be the universal covering of $X$ and let $\tilde{\ocal}_j$ for $j=1,2,\cdots$ be the lift of obstacles on $X$. Fix $q \in \partial M$. For each $(q,\eta) \in S^*M$ whose billiard trajectory is a loop, we correspond a finite length sequence $\{a_j(\eta)\}_{j\leq T}$ as above, where $T>0$ is the first index such that $q_T(\eta)$ is a preimage of $q$ on $\tilde{X}$. Then from above, we infer that there are finitely many $\eta$ such that $\{a_j(\eta)\}_{j\leq T}=B$ for any given $B$. Since the set of $B$ is countable, this proves that there are at most countably many $\eta$, for which billiard trajectory of $(q,\eta)$ is a loop.

%If  $(q, \eta)$ is self-focal then at some time $T(q)$ it must be self-conjugate, i.e. there must exist a broken  Jacobi
%field along $\Phi^t(q, \eta)$ which vanishes at $t = 0$ and $t = T(q)$.

%But such a conjugate is impossible, since the norm of the  Jacobi field is monotonically increasing between impacts
%and only grows at an impact. Hence along any billiard trajectory from any $q \in \partial M$, it   cannot vanish
%for $t > 0$.

\end{proof}

\begin{rem}The Lemma
can also be extracted from the  articles \cite{St1,St2} of L. Stoyanov.
\end{rem}

\section{\label{QER} Boundary quantum ergodic restriction theorems }

In this section, we briefly  review the statement of the quantum ergodic restriction theorem to the boundary of \cite{HZ}.
Roughly speaking, the results says that if the  billiard ball map $\beta$ on $B^* \partial M$ is ergodic, then the boundary
values (Cauchy data) $u_j^{b}$ of eigenfunctions are quantum ergodic.
We define
\begin{equation}\label{gammadef}
\gamma(q) = \sqrt{1 - |\eta|^2}, \quad q = (y,\eta),
\end{equation}
and put
{\large
\begin{table}[h]
\begin{tabular}{|c|c|c|c|c|}  \hline
 B & $B \phi_{\lambda}$  &   $\phi_{\lambda}^{b}$ & $d\mu_B$   \\
\hline
 Dirichlet &  $u|_{Y}$ &  $\lambda^{-1} \partial_{\nu} \phi_{\lambda} |_{Y}$ & $\gamma(q) d\sigma $ \\
\hline
 Neumann &   $\partial_{\nu} \phi_{\lambda} |_{Y}$  &  $\phi_{\lambda}|_{Y}$  & $\gamma(q)^{-1} d\sigma$  \\
\hline
% Robin  &  $(\partial_{\nu} u - \kappa u) |_{Y}$ &  $u|_{Y}$ & $\gamma(q)^{-1} d\sigma$  \\ \hline
%$\Psi^1$-Robin & $(\partial_{\nu} u - K u) |_{Y}$ & $u|_{Y}$ & $\displaystyle{\frac{\gamma(q)d\sigma }{ \gamma(q)^2 + %k(q)^2}} $\\

 \end{tabular}
 \caption{Boundary Values}\label{boundaryv}
\end{table}
 }
\bigskip

The third column consists of the non-zero part of the Cauchy data (Definition
\ref{CDDEF}) for eigenfunctions satisfying the associated boundary condition.
The measures in the right column are the so-called quantum limits of
the Cauchy data.

\begin{theorem}\label{main} \cite{HZ,ctz,Bu}
Let  $M $ be a compact  manifold with boundary and with  ergodic billiard map. Let $\{\phi_j^{b}\}$
be the boundary values of the eigenfunctions
$\{\phi_j\}$ of $\Delta_B$ on $L^2(M)$ in the sense of the table
above.
 Let $A_h$ be a semiclassical operator of order zero on $\partial M$. Then there is a subset $S$ of the positive integers, of density one,
such that
\begin{equation}\begin{gathered}
\lim_{j \to \infty, j \in S} \langle A_{h_j} \phi_j^b,
\phi_j^b \rangle = \omega_B(A),
\end{gathered}
\label{main-eqn}\end{equation}
where $h_j = \lambda_j^{-1}$ and $\omega_B$ is the classical state on the table above.
\end{theorem}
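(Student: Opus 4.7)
The plan is to follow the standard Shnirelman--Zelditch--Colin de Verdi\`ere framework for quantum ergodicity, but applied to boundary traces instead of the interior. As in the classical (boundaryless) case, the proof rests on three ingredients: a local Weyl law for boundary values, an Egorov-type theorem relating boundary quantization to the billiard map $\beta$, and the ergodicity of $\beta$ promoted to an $L^2$ mean-ergodic statement. The measures $d\mu_B$ in Table \ref{boundaryv} should emerge naturally from the symbolic calculus of the relevant boundary trace kernels, with the factor $\gamma(q)=\sqrt{1-|\eta|^2}$ appearing as the Jacobian comparing the interior Liouville measure to its projection onto $B^*\partial M$ along the billiard flow.

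First I would prove the boundary local Weyl law
\[
\lim_{\lambda\to\infty}\frac{1}{N(\lambda)}\sum_{\lambda_j\le\lambda}\langle A_{h_j}\phi_j^b,\phi_j^b\rangle=\omega_B(A),
\]
where $\omega_B(A)=c\int_{B^*\partial M}\sigma(A)\,d\mu_B$. The argument would represent the interior eigenfunctions on a neighborhood of $\partial M$ via the semiclassical Poisson operator, so that $\phi_j^b$ becomes (in the appropriate sense from the table) the full boundary trace; a parametrix for the wave kernel near the boundary, combined with a Tauberian theorem, then produces the asymptotic with the correct weight. The Dirichlet vs.\ Neumann distinction in the measures is forced by the different symbols of the boundary-trace kernels, and I would keep careful track of the microlocal cutoffs needed to separate the hyperbolic region $|\eta|<1$ from the glancing set $|\eta|=1$.

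Second, I would establish an Egorov theorem adapted to the billiard map on the hyperbolic region of $B^*\partial M$. For a zeroth-order semiclassical symbol $a$ supported away from $S^*\partial M$, the correct statement is that $\beta^*a$ is the boundary principal symbol of a conjugated operator (obtained by propagating $A$ through the interior by the semiclassical half-wave group and taking boundary data via the Poisson operator). Combined with the preceding local Weyl law, this yields a variance estimate of the form
\[
\frac{1}{N(\lambda)}\sum_{\lambda_j\le\lambda}\bigl|\langle A_{h_j}\phi_j^b,\phi_j^b\rangle-\omega_B(A)\bigr|^2\le \omega_B\bigl(|\langle a\rangle_T-\omega_B(a)|^2\bigr)+o_T(1),
\]
where $\langle a\rangle_T=\frac{1}{T}\sum_{k=0}^{T-1}a\circ\beta^k$. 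Ergodicity of $\beta$ (Proposition \ref{E}) combined with von Neumann's mean ergodic theorem drives the right-hand side to $0$ as $T\to\infty$. A standard Chebyshev/diagonal argument then extracts the density-one subsequence $S$ asserted in (\ref{main-eqn}).

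The main obstacle will be the glancing set. The billiard map is genuinely discontinuous along $S^*\partial M$, and semiclassical boundary operators are not naturally closed under conjugation by the half-wave group across the glancing region. To handle this, I would introduce a two-parameter microlocal cutoff $\chi_\delta$ which excises a $\delta$-neighborhood of $S^*\partial M$, prove the Egorov theorem and variance estimate with $\chi_\delta A\chi_\delta$ in place of $A$, and then show using Theorem \ref{K} (or a direct boundary mass estimate near the glancing set, as in \cite{HZ,ctz,Bu}) that the contribution from the excised region is $O(\delta)$ uniformly in $j$. Letting $\delta\to 0$ at the end recovers the full statement. The absence of boundary self-focal points (Lemma \ref{NFP}) ensures that the set of trajectories which become tangential has measure zero, so this truncation is harmless for the density-one conclusion.
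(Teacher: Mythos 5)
The paper itself does not prove Theorem \ref{main}: Section \ref{QER} explicitly says it ``briefly review[s] the statement of the quantum ergodic restriction theorem to the boundary of \cite{HZ},'' and the result is imported wholesale from \cite{HZ,ctz,Bu}. So there is no internal proof for your proposal to be measured against; the comparison has to be with the cited works. Measured that way, your outline captures the correct architecture of the Hassell--Zelditch argument: boundary local Weyl law (Proposition \ref{LWLHZ}), billiard-map Egorov theorem via the interior wave propagator and boundary restriction, variance estimate, ergodicity of $\beta$, and a diagonal/Chebyshev extraction. The identification of $\gamma(q)^{\pm 1}\,d\sigma$ as the Jacobian weight from the symbol of the boundary-trace kernel is the right heuristic.

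There are, however, two concrete defects in how you propose to control the glancing set. First, Theorem \ref{K} is not the right tool: the Kuznecov formula concerns the linear functionals $\int_{\partial M} f\,\phi_j^b\,ds$, not the quadratic mass $\int_{\partial M} f\,|\phi_j^b|^2\,ds$, and it cannot bound the $L^2$-concentration of $\phi_j^b$ near $S^*\partial M$. What actually controls the excised region is the boundary local Weyl law itself: since $\mu_B$ is a finite measure on $B^*\partial M$ (the factor $\gamma^{-1}$ is integrable), the average of $\langle \chi_\delta\phi_j^b,\phi_j^b\rangle$ over $\lambda_j\le\lambda$ is $o_\delta(1)$, and a Chebyshev argument then gives density-one control. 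Your claim that the excised contribution is ``$O(\delta)$ uniformly in $j$'' is both stronger than what is true and stronger than what is needed; only Ces\`aro-averaged control is available, and that suffices. Second, invoking Lemma \ref{NFP} (absence of boundary self-focal points) is misplaced. Theorem \ref{main} is stated for a general compact manifold with ergodic billiard map; nothing about self-focal points enters. The fact that the set of trajectories which ever become tangential has Liouville measure zero is a general fact about billiard tables with smooth boundary, cited in Section \ref{BMAP} from \cite{CFS}, and holds independently of any curvature or focal-point hypothesis. The no-self-focal-point condition is an input to Theorem \ref{SoZ} (sup-norm bounds), not to the boundary quantum ergodicity theorem.
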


We only apply the result to `multiplication operators' by $f \in C(\partial M)$
in this article, in which case it takes the form \eqref{QES}.

The main idea of Theorem \ref{main} is that the Cauchy data of the eigenfunctions
provide a quantum analogue of a cross-section for the billiard flow, just
as the inward pointing unit tangent vectors $S^*_{\rm{in}, \partial M} M$
along the boundary provide a classical cross section. The interior quantum
ergodicity of eigenfunctions thus implies boundary quantum ergodicity.
In \cite{ctz}, this results was generalized to any smooth hypersurface of
$M$. For Theorem \ref{theo1a} we only need the case where the hypersurface
is the boundary.

\section{Restriction of the wave group of a Sinai billiard to the boundary}

In this section, we prepare for the proof of Theorem \ref{K}  by considering
 the wave front set of the Cauchy data
of the Dirichlet, resp. Neumann, wave kernel along the boundary. We also
recall the boundary local Weyl law.

The billiard flow arises in spectral problems because the singularities of the fundamental solution of the wave
equation propagate along billiard trajectories.  The billiard flow relevant to our problem is thus determined by
propagation of singularities for the wave equation on a non-positively curved surface \eqref{M} in the exterior
of a finite union of disjoint convex obstacles. In this case, the singularities which intersect the boundary are known as grazing rays. It was  proved independently by R. B. Melrose and M. E. Taylor  \cite{M,Tay}  that a singularity propagating along a grazing ray simply continues along the same
geodesic of the ambient space when it touches the boundary. Consequently, the dynamical billiard flow of the previous
section coincides with the propagation of singularities.

The Melrose-Taylor diffractive parametrix is used to determine the precise
singularities of solutions of wave equations near grazing rays, i.e. billiard trajectories
intersecting $\partial M$ tangentially.
In \cite{SoZ3}, the diffractive
parametrix of Melrose-Taylor is used to prove Theorem \ref{SoZ}. The
remaining properties of the wave kernel needed for Theorem \ref{theo1a}
are more elementary and do not require the diffractive parametrix.

\subsection{Wave front set of the wave group}

We denote by
\begin{equation} \label{EBSB} E_B(t) = \cos \big( t \sqrt{ -
\Delta_{B}}\;\big), \;\;\; \mbox{resp.}\;\;S_B(t) = \frac{\sin
\big(t \sqrt{- \Delta_{B }}\big)}{\sqrt{ -\Delta_{B }}}
\end{equation} the even (resp. odd) wave operators  $(M, g)$ with boundary conditions
$B$. The  wave group  $E_B(t)$  is the
solution operator of the  mixed problem
$$\left\{ \begin{array}{l} \bigl(\frac{\partial^2}{\partial t^2} - \Delta) E_B(t, x, y)=0, \\ \\ \quad E_B(0,x, y)=
\delta_x(y),\;\;\ \frac{\partial}{\partial t}  E_B(0,x, y)= 0,
\;\; x, y \in M;
\\ \\ B E_B(t, x, y) = 0,\;\; x \in \partial M \end{array} \right. $$

The wave front sets of $E_B(t, x, y)$ and $S_B(t, x, y)$ are determined by the    propagation of
singularities theorem of \cite{MSj}  for the mixed Cauchy Dirichlet (or Cauchy Neumann)
problem for the wave equation. We  from   \cite{HoI-IV} (Vol.
III, Theorem 23.1.4 and Vol. IV, Proposition 29.3.2) that
\begin{equation} \label{WFEB} WF (E_B(t, x, y)) \subset \bigcup_{\pm} \Lambda_{\pm},  \end{equation} where $\Lambda_{\pm} = \{(t, \tau,x, \xi, y, \eta):\; (x, \xi)= \Phi^t(y, \eta), \;\tau =\pm |\eta|_y\} \subset T^*(\mathbb R \times \Omega \times \Omega)$ is the graph of the generalized (broken) geodesic flow, i.e. the billiard flow $\Phi^t$. The same is true for $WF(S_B)$. As mentioned above, the
broken geodesics in the setting of \eqref{M} are simply the geodesics of the ambient negatively curved surface,
with the equal angle reflections at the boundary; tangential rays simply continue without change at the impact.

\subsection{Restriction of wave kernels to the boundary}

A key object in the proof of Theorem \ref{SoZ}   is the analysis of the  restriction of the Schwartz  kernel $E_B(t, x, y) $ of $\cos t \sqrt{\Delta_B}$
to $\mathbb{R} \times \partial M \times \partial M$ and further to
$\mathbb{R} \times \Delta_{\partial M \times \partial M}$, where $ \Delta_{\partial M \times \partial M}$
is  the diagonal  of $\partial M \times \partial M$.  We denote by $d q$ the surface measure on the boundary
$\partial M$, and by $r u =
u|_{\partial M}$ the trace operator. We denote by $E_B^b(t, q', q) \in \mathcal{D}'( \mathbb{R} \times \partial M
\times \partial M)$ the following boundary traces of the Schwartz kernel $E_B(t, x, y)$ defined in (\ref{EBSB}):

\begin{equation} \label{b}
E_B^b(t, q', q) =
\left\{\begin{array}{ll}
r_{q'} r_{q} \partial_{\nu_{q'}} \partial_{\nu_q} E_D(t, q', q) , & \;\; \;\;\; \mbox{Dirichlet}\\ \\
% \nabla_{q'}^T \nabla_{q}^T r_{q'} r_{q} S_N(t, q', q) +
r_{q'} r_{q}\;  \; E_N(t, q', q), & \;\; \mbox{Neumann}
\end{array} \right. \end{equation}
The subscripts $q', q$ refer to the variable involved in the
differentiating or restricting.
Henceforth we use the notation $\gamma_q^B$ for the boundary trace. Thus, $\gamma_q^B = r_q$ in
the Neumann case and $\gamma_q^B = r_q \partial_{\nu_q}$ in the Dirichlet case.

The sup norm bounds of Theorem \ref{SoZ}   are derived in \cite{SoZ3} from an analysis of the singularities of the Cauchy data of the wave kernel on the diagonal of the boundary,
\begin{equation} \label{BTW} E_B^b(t, q, q) =  \sum_{j = 1}^{\infty}  \cos (t \lambda_j)
|\phi_j^b(q)|^2. \end{equation}   The Kuznecov
asymptotics of Theorem \ref{K}  are proved by studying the off-diagonal
integrals,
\begin{equation} \label{BTWK}
\int_{\partial M} \int_{\partial M} E_B^b(t, q, q')  ds(q) ds(q')=  \sum_{j = 1}^{\infty}  \cos (t \lambda_j)\left|\int_{\partial M}  \phi_j^b(q) ds(q) \right|^2.
\end{equation}
The contribution of the tangential directions to the singularities of \eqref{BTWK}
is negligible, and that is why the diffractive parametrix is not needed.

\subsection{\label{WF}Wave front set of the restricted wave kernel}

%In particular,
%
%$$E^b_{[0, \lambda]}(q, q) = \sum_{j: \lambda_j \leq \lambda}
%|\phi_j^b(q)|^2  . $$

The first and simplest piece of information is the wave front set of \eqref{BTW}. It follows from \eqref{WFEB} and from
  standard results on pullbacks of wave front sets under maps, the  wave front set of  $E_B^b(t, q, q')$ consists of co-directions of broken trajectories which begin and end on $\partial M$.
That is,
\begin{equation} \label{WF1}
\begin{array}{lll}  WF (\gamma_q^B \gamma_{q'}^B E(t, q, q'))
&\subset & \{(t, \tau, q,
\eta, q', \eta') \in B^* \partial M \times B^* \partial M: \\&&\\&&[\Phi^t(q, \xi(q, \eta))]^T = (q', \eta'), \;
\tau = - |\xi| \}.
\end{array}
\end{equation}
Here, the superscript $T$ denotes the tangential projection to $B^* \partial M$.
 We refer to Section 2 of \cite{HeZ} for an extensive discussion.
It follows from \eqref{WF1} that  \begin{equation} \label{WF2} \begin{array}{lll} WF (\gamma_q^B \gamma_{q'}^B E(t, q, q))
& \subset &  \{(t, \tau, q, \eta, q, \eta') \in B^*_q \partial M \times B^*_q \partial M: \\&&\\&& [G^t(q, \xi(q,
\eta))]^T = (q, \eta'),\;\; \tau = - |\xi(q, \eta)|\}. \end{array}  \end{equation}
Thus, for $t \not= 0$, the  singularities of the boundary  trace $\gamma_q^B
\gamma_{q'}^B E(t, q, q)$ at $q \in \partial M$  to broken
bicharacteristic
 loops based at $q$ in
$\overline{M}$. When $t = 0$ all inward pointing co-directions belong to the wave front set.

%begin{remark} One of the principal  features of the boundary trace  $E_B^b(t, %q, q)$ along the diagonal is that
%the singularity at $t = 0$ becomes uniformly isolated from other
%singularities, while the interior kernel $E_B(t, x, x)$ has
%singularities at $t = 2 d(x)$ arbitrarily close to $t = 0$. \end{remark}

\subsection{Local Weyl law}

The boundary local Weyl law gives an asymptotic formula for the spectral averages of the expected value of an observable $A_h$ relative to boundary traces of eigenfunctions. The relevant  algebra of observables in our setting as in \cite{HZ}
is the algebra  $\Psi_h^0(\partial M)$ of zeroth order
semiclassical pseudodifferential operators on $\partial M,$ depending on
the parameter $h \in [0, h_0]$. We denote the symbol of  $A = A_h
\in \Psi_h^0(\partial M)$ by  $a = a(y,\eta,h)$. Thus $a(y, \eta) = a(y,
\eta, 0)$ is a smooth function on $T^*\partial M$; we  may without loss of
generality assume it is compactly supported.
 We further define  states on the algebra $\Psi_h^0(\partial M)$ by
\begin{equation}\begin{aligned}
\omega_B(A) &= \frac{4}{\operatorname{vol}(S^{n-1})\operatorname{vol}(M)} \int_{B^*\partial M} a(y,\eta)  d\mu_B.
\end{aligned}\end{equation}
Here, as in Table \ref{boundaryv},

$$d\mu_B = \gamma(q) d\sigma \; (\mbox{Dirichlet}), \;\;\;d \mu_B =  \gamma(q)^{-1} d\sigma\;\; (\mbox{Neumann})
$$
where $$
\gamma(q) = \sqrt{1 - |\eta|^2}, \quad q = (y,\eta).$$

For either Dirichlet or Neumman boundary conditions,  the local Weyl law
is proved in Lemma 1.2 of  \cite{HZ}:

\begin{proposition} \label{LWLHZ}  Let $A_h$ be  a zeroth order semiclassical operator on $\partial M$. Then,
\begin{equation}
\lim_{\lambda \to \infty} \frac1{N(\lambda)} \sum_{\lambda_j \leq \lambda} \langle A_{h_j} \phi_j^b, \phi_j^b \rangle \to \omega_{B}(A)\label{Weyl-D}\end{equation}
\end{proposition}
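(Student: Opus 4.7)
The plan is a Fourier Tauberian argument for the boundary spectral sum. The dynamical information enters only through the small-time structure of the boundary-restricted wave kernel, so neither ergodicity nor exclusion of self-focal points is required for the leading Weyl coefficient. Pick an even $\rho \in \scal(\reals)$ with $\hat\rho \in C_c^\infty(\reals)$ supported in a small neighborhood of $0$ and $\hat\rho(0) = 1$, and consider the smoothed spectral sum
\[
\sigma_A(\lambda) := \sum_j \rho(\lambda-\lambda_j)\langle A_{h_j}\phi_j^b, \phi_j^b\rangle_{\partial M} = \frac{1}{2\pi}\int \hat\rho(t)\, e^{it\lambda}\, T_A(t)\, dt,
\]
where $T_A(t) := \sum_j e^{-it\lambda_j}\langle A_{h_j}\phi_j^b,\phi_j^b\rangle_{\partial M}$. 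By Table \ref{boundaryv}, $T_A(t)$ coincides (up to explicit, boundary-condition-dependent numerical factors) with the trace on $\partial M$ of $A_h$ composed with the doubly boundary-restricted half-wave kernel built from the trace operator $\gamma^B$ of \S 4.

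Next, for $t$ in a sufficiently small neighborhood of $0$, the wave-front estimate \eqref{WF2} confines the singular support of $E_B^b(t,q,q')$ to the diagonal $\{q=q'\}$, since every non-tangential billiard loop has a positive minimal return time. A Hadamard-type parametrix for the boundary half-wave kernel in a neighborhood of $\{t=0,\,q=q'\}$ is therefore admissible. Substituting it into $\sigma_A(\lambda)$ and performing stationary phase in the tangential fiber variable with large parameter $\lambda$ yields
\[
\sigma_A(\lambda) = c_n\, \lambda^{n-1}\int_{B^*\partial M} a(y,\eta)\, d\mu_B(y,\eta) + o(\lambda^{n-1}),
\]
where $a$ is the principal symbol of $A_h$. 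The weight $\gamma(q)^{\pm 1}$ in $d\mu_B$ emerges from the Jacobian of the projection $S^*_{\mathrm{in},q} M \to B^*_q \partial M$ combined with the boundary-condition-specific factor in Definition \ref{CDDEF} that relates $\phi_j^b$ to the interior Cauchy data at $\partial M$.

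The glancing set $S^*\partial M \subset B^*\partial M$ has measure zero, and its contribution is absorbed in the error by cutting $a$ off on an $\epsilon$-neighborhood of glancing, applying the parametrix calculation above in the complement, and controlling the residual via the elementary upper bound $\sum_{\lambda_j\leq\lambda}\|\phi_j^b\|_{L^2(\partial M)}^2 = O(N(\lambda))$ together with the small measure of that neighborhood; in particular no diffractive/Melrose--Taylor analysis is needed at this order. A Tauberian theorem, applied after splitting $A$ into real and imaginary, positive and negative parts so that each piece produces a monotone spectral measure, upgrades the smoothed asymptotic to the Ces\`aro sum $\sum_{\lambda_j \leq \lambda}$ of the proposition. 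Normalizing by $N(\lambda) \sim \vol(S^{n-1})\vol(M)\lambda^n/(n(2\pi)^n)$ then produces the prefactor $4/(\vol(S^{n-1})\vol(M))$ in $\omega_B(A)$.

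The main obstacle is the boundary-condition-dependent bookkeeping: obtaining the correct sign of the exponent on $\gamma(q)$ and the precise numerical constant $c_n$ requires careful stationary phase on the parametrix, in particular keeping track of the extra $\lambda^{-1}$ weight on $\partial_\nu\phi_j$ in the Dirichlet Cauchy data of Definition \ref{CDDEF}. Nothing in the argument uses the Sinai-Lorentz geometry; the proof is genuinely local in time, which is why this local Weyl law holds for any compact manifold with boundary.
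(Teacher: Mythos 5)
The paper does not prove this proposition; it invokes Lemma~1.2 of \cite{HZ} and merely observes that the hypothesis there---that the kernel of $A_h$ avoid the singular set of $\partial M$---is vacuous here because $\partial M$ is smooth. So your proposal must be weighed against the argument in \cite{HZ}, not against a proof written in this paper.

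Your smoothed wave-trace Tauberian scheme is the natural first attempt and is not misguided in spirit, but it has a real gap precisely at glancing. You dispose of the near-glancing piece by combining ``the small $\mu_B$-measure of an $\epsilon$-neighborhood of $S^*\partial M$'' with an ``elementary upper bound $\sum_{\lambda_j\le\lambda}\|\phi_j^b\|_{L^2(\partial M)}^2 = O(N(\lambda))$.'' These two facts do not yield the required smallness. If $a_2$ is a cutoff supported in $\{1-\epsilon < |\eta| < 1\}$, the a priori bound only gives $\sum_{\lambda_j\le\lambda}\langle A_2\phi_j^b,\phi_j^b\rangle = O(N(\lambda))$ with an implied constant \emph{uniform in $\epsilon$}; to get $o_\epsilon(1)\cdot N(\lambda)$ you would already need to know that the boundary spectral measure equidistributes according to $d\mu_B$, which is the very statement being proved. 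The argument is circular at this step. Furthermore, the advertised a~priori bound is not elementary: for Neumann data the individual boundary $L^2$ norms grow, and the averaged bound $\sum\|\phi_j^b\|^2 = O(N(\lambda))$ is exactly the $A=\mathrm{Id}$ case of Proposition~\ref{LWLHZ}. Your boundary Hadamard parametrix establishes it only after the cutoff away from glancing, so it cannot be fed back in to control the glancing remainder.

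The way \cite{HZ} (and hence the paper, by citation) breaks this circle is entirely different: a Rellich-type commutator identity expresses $\langle A_h\phi_j^b,\phi_j^b\rangle_{\partial M}$ as an interior quantity $\langle [\Delta, Q_h]\phi_j,\phi_j\rangle_M$ up to lower-order terms, where $Q_h$ is a suitable interior extension of $A_h$. The H\"ormander interior local Weyl law then produces the asymptotic, with the weight $\gamma(q)^{\pm 1}$ arising as the Jacobian when converting the interior symbol of the commutator to a boundary symbol; no boundary wave-trace parametrix, and in particular no glancing analysis, is needed. If you insist on a purely boundary wave-trace argument, you must either supply the diffractive (Melrose--Taylor) analysis near glancing that you explicitly claim to avoid, or import a nontrivial boundary trace estimate of Tataru/Burq--G\'erard--Tzvetkov type to obtain the uniform upper bound independently. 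The commutator reduction in \cite{HZ} is both shorter and the one this paper relies on.

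Your closing remark---that the proposition is local in time and therefore holds for any compact manifold with smooth boundary, independently of the Sinai--Lorentz geometry---is correct and agrees with the paper's use of it.
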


Note that in \cite{HZ} the kernel of $A_h$ was assumed to be disjoint from the singular set of $\partial M$,
but in this article the singular set is empty.

%The advantage of this automorphic (or Poincar\'e series) representation of $E_B(t, x, y)$
%and $E^b_B(t, x, y)$ is that $\tilde{E}_B(t, x, y)$ and $\tilde{E}_B^b(t, q, q') $ are relatively
%simple wave kernels. In particular, if $\partial M$ consists of a single component, then  in $\tilde{X}$ a broken geodesic ray can only intersect the
%convex obstacle once and the parametrix of Melrose \cite{Me} and Taylor \cite{T} can be used. In a future article,
%we will give a sharper sup norm estimate based on the use of the parametrix.

%The kernel \eqref{rho}  has the Poincar\'e series
%\begin{equation} \begin{array}{lll}  N_{T}^b(\lambda, q) : & = &   \sum_{\gamma \in \Gamma}
%\tilde{N}^b_{T}(\lambda, \tilde{q}, \gamma \tilde{q}), \\ &&\\ \mbox{with}\;
% \tilde{N}_{T}^b (t, \tilde{q}, \tilde{q'}) & = &  \int_{\mathbb{R}}   \tilde{E}_B^b(t, \tilde{q}, \tilde{q'})
%\hat{\rho}_T(t)  dt. \end{array} \end{equation}

%Our purpose is to prove:
%\begin{theo} \label{SUPSUM}  $$\left\{ \begin{array}{l}
%\tilde{N}^b_{\rho}(\lambda, \tilde{q},  \tilde{q}) = \lambda^n + R^e_{\rho}(\lambda, \tilde{q}
%\end{array} \right.$$

%\end{theo}

\section{\label{KUZSECT} Kuznecov sum formula for the boundary integral: Proof of Theorem \ref{K}}

The general Kuznecov formula in \cite{z} for $C^{\infty}$ Riemannian manifolds $(M, g)$ without
boundary is a singularity expansion for the distribution
\begin{equation} \label{SH} S_H(t) = \int_H \int_H E(t, q, q') ds(q') ds(q), \end{equation}
where $H \subset M$ is a smooth submanifold and where
$$E(t) = \cos t \sqrt{\Delta} $$
is the even wave kernel.
The singularities of $S_H(t)$ in the boundaryless case were shown to correspond
to trajectories of the geodesic flow which intersect $H$ orthogonally at two distinct times, and to be singular
at the difference $T$ of these times. We refer to such trajectories  as H-{\it  orthogonal} geodesics.

Theorem \ref{K} is a generalization of the  Kuznecov formula  of \cite{z} to boundary traces on  surfaces
with concave boundary. We do not consider the full singularity expansion as in \cite{z} but
only the singularity at $t = 0$ of
\begin{equation}\label{Sft} \begin{array}{lll} S_f(t): & = & \int_{\partial M} \int_{\partial M}  E_B^b(t, q, q') f(q) f(q') ds(q) ds(q')
\\ &&\\ & = &  \sum_{j} \cos t \lambda_j \left|\int_{\partial M} f(q)  \phi^b_j(q) ds(q) \right|^2 .
\end{array} \end{equation}
%\begin{equation} \label{SB} S_{\partial M, B} (t) = \int_{\partial M}  \int_{\partial M}  E_B^b (t, q, q') ds(q') ds(q), \end{equation}
where $E_B(t) = \cos t \sqrt{\Delta_B} $
is the even wave kernel with either Dirichlet or Neumann boundary conditions.

 Theorem \ref{K} is   a corollary  of Theorems 1, Proposition 2 and Theorem 3  of  \cite{HHHZ}, which are proved for general
manifolds with boundary.

\begin{theorem}\label{thm:BFSS}
Let $\rho\in\mathcal S(\mathbb{R})$ be such that $\hat\rho$ is identically $1$ near $0$, and has sufficiently small support.  Let  $f\in C^\infty(\partial M)$. Then for either  the Dirichlet or Neumann boundary conditions,
\begin{equation}
f(x)=\lim_{\lambda\to\infty} \frac{\pi}{2}\sum_j\rho(\lambda-\lambda_j)\langle\phi_j^b,f\rangle\phi_j^b(x),
\label{eq:BFSS}\end{equation}
where $\langle\cdot,\cdot\rangle=\langle\cdot,\cdot\rangle_{\partial M}$ denotes the inner product in $L^2(\partial M)$.
\end{theorem}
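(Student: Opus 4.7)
The plan is to realize the right-hand side of \eqref{eq:BFSS} as the Schwartz kernel of a reproducing operator acting on $f$, and to verify that this operator converges to the identity as $\lambda\to\infty$. Concretely, set
$$K_\lambda(x,y) := \frac{\pi}{2}\sum_j \rho(\lambda-\lambda_j)\phi_j^b(x)\phi_j^b(y),$$
so the claim is equivalent to $\int_{\partial M} K_\lambda(x,y) f(y)\,ds(y) \to f(x)$ for any $f\in C^\infty(\partial M)$.

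First I would pass to the Fourier side. We may assume $\rho$ is even (symmetrize if not), so Fourier inversion yields
$\rho(\lambda - \lambda_j) = \tfrac{1}{\pi}\int_0^\infty \hat\rho(t)\cos(t(\lambda-\lambda_j))\,dt$, and hence
$$\sum_j\rho(\lambda-\lambda_j)\phi_j^b(x)\phi_j^b(y) = \frac{1}{\pi}\int_0^\infty \hat\rho(t)\cos(t\lambda)\,E_B^b(t,x,y)\,dt,$$
where $E_B^b(t,x,y)$ is the boundary trace of the even wave kernel from \eqref{b}--\eqref{BTW}. Since $\hat\rho$ is compactly supported near $0$, only the short-time behavior of $E_B^b$ enters the analysis.

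Next I would use a parametrix for $E_B^b(t,x,y)$ for small $t$. Away from grazing directions, the standard Hadamard-type Fourier integral parametrix for the free half-wave kernel, combined with the method of images implementing the boundary reflection, gives a local representation of the form
$$E_B^b(t,x,y) \sim \int e^{i\psi(t,x,y,\xi)}\,a(t,x,y,\xi)\,d\xi,$$
where $\psi$ encodes the direct plus reflected phase and $a$ is a classical symbol. Near tangential directions the Melrose--Taylor diffractive parametrix must be used instead, but its contribution to $K_\lambda(x,y)$ is negligible because grazing rays form a set of measure zero in $B^*\partial M$. Inserting this parametrix into the $t$-integral and applying stationary phase in $\xi$ isolates the leading $\lambda$-contribution: the phase is stationary on the diagonal $x=y$, and the resulting amplitude matches the boundary Weyl density that appears in Proposition \ref{LWLHZ}. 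The factor $\pi/2$ in the statement is precisely the reciprocal of the Weyl constant $2/\pi$, so $K_\lambda$ acts as an approximate identity on $C^\infty(\partial M)$, yielding the pointwise limit $f(x)$.

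The main obstacle is the careful small-time construction of $E_B^b(t,x,y)$ near grazing rays together with the verification that the reflected and diffracted contributions do not disturb pointwise convergence. This analysis, valid on general manifolds with boundary and in particular on the concave-boundary surfaces considered here, is exactly the content of Theorem 1, Proposition 2, and Theorem 3 of \cite{HHHZ}. Consequently, the proof of \eqref{eq:BFSS} reduces to invoking those results.
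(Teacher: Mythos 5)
The paper does not actually prove Theorem \ref{thm:BFSS}; it states it as a restatement of Theorem~1, Proposition~2, and Theorem~3 of \cite{HHHZ}, and then gives a separate, self-contained argument only for the weaker integrated consequence, Theorem \ref{K}, which is all that is needed in the sequel. Your final sentence — that the proof ``reduces to invoking those results'' — lands at the same endpoint, so your overall logical skeleton matches the paper's.

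However, the heuristic sketch preceding that citation contains a genuine gap. You claim the Melrose--Taylor diffractive contribution to $K_\lambda(x,y)$ is negligible ``because grazing rays form a set of measure zero in $B^*\partial M$.'' That reasoning does not work for a \emph{pointwise} statement. For the diagonal boundary trace $E_B^b(t,q,q)$, the glancing set $\tau^2 = |\eta|^2$ lies in the wave front set at small $|t|$, and its contribution to the singularity at $t=0$ is of the same leading order as that of the transversal directions. Having measure zero in the cotangent bundle says nothing about the size of the associated oscillatory contribution: orders of Lagrangian/diffractive distributions are not controlled by measures of their underlying Lagrangians. Indeed the paper explicitly points out that the glancing directions ``cannot be ignored'' for the pointwise trace, and that the Melrose--Taylor diffractive parametrix is required there (this is what underlies Theorem \ref{SoZ} and the analysis in \cite{HHHZ}).

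The reason the paper can sidestep this issue in its self-contained proof of Theorem \ref{K} is not a measure-zero argument, but a wave-front-set pushforward argument specific to the integrated quantity: after integrating against $f(q)f(q')\,dS(q)\,dS(q')$ and pushing forward under $(t,q,q')\mapsto t$, only directions conormal to $\partial M$ at both endpoints produce singularities of $S_f(t)$ near $t=0$, and these are disjoint from the tangential directions. Thus a pseudodifferential cutoff $\chi(q,D_t,D_q)$ away from the glancing set removes nothing from $WF(S_f)$ near $t=0$. That argument exploits the integration over $\partial M$ and is not available for the pointwise kernel $K_\lambda(x,y)$. If you want an actual proof of \eqref{eq:BFSS} (rather than of Theorem \ref{K}), you must either carry out the diffractive parametrix analysis or cite \cite{HHHZ} outright; you cannot dismiss the grazing set with the measure-zero step as written.
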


%In the Neumann case, we have%
%\begin{proposition}[Completeness of boundary traces of Neumann eigenfunctions]\label{prop}
%Let $\rho$ be as in Theorem~\ref{thm:BFSS}.  Then for any $f\in C^\infty(\partial M)$, we have
%\[
%f(y)=\frac{\pi}{2}\sum_j\rho(\lambda-\lambda_j)\langle\phi_j^b,f\rangle\phi_j^b(y) + O(\lambda^{-2}),\quad\lambda\to\infty.
%\]
%\end{proposition}

Evidently, Theorem \ref{K} follows by taking the inner product with $f$ on both sides of the equation.
We now give  a relatively self-contained proof, different from that of \cite{HHHZ}, which  exploits   the concavity of the boundary. For purposes of this
article it is only necessary to prove  Corollary \ref{cor81}. We only use \cite{HHHZ} to calculate
one constant at the end.

\subsection{Sketch of the proof}

%We define $\phi_j$ to be the interior normal derivative of $\phi_j$ on the boundary:
%$$\phi_j:=d_nu_j|_{\partial M}.$$
%We also study the Neumann Laplacian $\Delta_N$. In this case, we denote  the boundary values of an orthonormal basis of eigenfunctions by $\{\omega_j\}_{j=1}^\infty$.

The first step is the following

\begin{lemma} \label{ep}   There exists $\epsilon_0 > 0$ so that the
$$\mbox{sing supp} S_f(t) \cap (- \epsilon_0, \epsilon_0) = \{0\}. $$
\end{lemma}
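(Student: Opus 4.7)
The plan is to localize the possible singularities of $S_f$ by means of the wave front set bound \eqref{WF1} for the restricted wave kernel $E_B^b$, and then to rule out nonzero small times by invoking concavity of $\partial M$ together with compactness.

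Writing $S_f(t)=\pi_{*}\bigl((f\otimes f)\,E_B^b(t,q,q')\bigr)$, where $\pi:\mathbb{R}\times\partial M\times\partial M\to\mathbb{R}$ is projection on the first factor, the standard pushforward calculus for wave front sets gives
\[
\mathrm{WF}(S_f)\subset\{(t,\tau):(t,\tau,q,0,q',0)\in\mathrm{WF}(E_B^b)\text{ for some }q,q'\in\mathrm{supp}\,f\}.
\]
By \eqref{WF1}, such a covector can appear in $\mathrm{WF}(E_B^b)$ only if $[\Phi^{t}(q,\nu_q)]^{T}=(q',0)$, where $\nu_q=\xi(q,0)$ is the inward unit conormal at $q$. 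Consequently, any nonzero singular time of $S_f$ must equal the length of a billiard trajectory starting on $\partial M$ in the inward normal direction and meeting $\partial M$ again in the normal direction.

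It then remains to show that no such trajectory of length less than some $\epsilon_0>0$ exists. Around any $q_0\in\partial M$ choose Fermi coordinates $(x,y)$ with $\partial M=\{y=0\}$ and $M=\{y>0\}$; in these coordinates the signed distance from $(x_0,s)$ to $\partial M$ is exactly $s$ for all $s$ below the local injectivity radius. Hence $\exp_q(s\nu_q)$ lies strictly in the interior of $M$ at distance $s$ from $\partial M$ for $0<s<\delta(q_0)$. Compactness of $\partial M$ promotes $\delta(q_0)$ to a uniform constant $\epsilon_0>0$, so that the billiard flow undergoes no reflection before time $\epsilon_0$ and in particular $\Phi^{s}(q,\nu_q)\notin\partial M$ for $0<s<\epsilon_0$ and every $q\in\partial M$. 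Therefore \eqref{WF1} produces no covectors above any $t\in(-\epsilon_0,0)\cup(0,\epsilon_0)$, which gives $\mathrm{sing\,supp}(S_f)\cap(-\epsilon_0,\epsilon_0)\subset\{0\}$. The reverse inclusion $\{0\}\subset\mathrm{sing\,supp}(S_f)$ is immediate from the spectral expansion $S_f(t)=\sum_j|\int f\phi_j^b|^2\cos(t\lambda_j)$ together with the boundary Weyl law, so equality holds.

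The main obstacle is to verify that \eqref{WF1} applies cleanly at $\eta=\eta'=0$ without invoking the diffractive corrections of \S\ref{WF}. This is not actually an issue, because $\eta=0$ lies strictly inside $B^{*}\partial M$, bounded away from the tangential set $|\eta|=1$ where grazing rays and the Melrose--Taylor parametrix become relevant; for the normal directions at play here, only the elementary propagation statement \eqref{WF1} is needed.
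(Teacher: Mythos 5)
Your argument is correct and follows essentially the same route as the paper: the wave-front bound \eqref{WF1} reduces small-time singularities of $S_f$ to billiard trajectories meeting $\partial M$ orthogonally at both endpoints, and these are ruled out on a uniform interval via the collar (Fermi/tubular) neighborhood of the boundary. The paper makes the geometric lower bound explicit by splitting into the cases of returning to the same component (bounded by the tube's embedding radius) versus a different component (bounded by the separation $d_{\mathcal{O}}$ between obstacles), but your single uniform tubular-neighborhood estimate subsumes both cases.
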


\begin{proof}

By \eqref{WF2} and standard pullback and pushforward calculations for wave front sets as in \cite{z},  the singular
support of $S_f(t)$ consists of $t = 0$ together with  the `sojourn times' equal to lengths of billiard trajectories  which hit the boundary
 orthogonally   at both endpoints. Such a billiard trajectory either (i)  intersects two distinct  components of $\partial M$,
in which case its length is bounded below by the minimum  distance $d_{\mathcal{O}}$ between the components, or (ii) intersects the same component
orthogonally. However if it starts off orthogonally to the boundary, it cannot intersect the boundary again until
it departs from a Fermi normal coordinate chart along the boundary, i.e. the radius of the maximal tube around
each component which is embedded in $M$.  The minimum $\epsilon(M, g)$  over components of the maximal embedding
radius gives a geometric lower bound for
its length in this case.

Thus, we may let $\epsilon_0 = \min\{\epsilon(M, g), d_{\mathcal{O}}\}$.

\end{proof}

To prove Theorem \ref{K}  it thus suffices to determine the singularity at $t = 0$ of $S_f(t)$. Equivalently, we prove a  smoothed version and then use a cosine Tauberian theorem. As mentioned in the introduction, we only need a sufficiently accurate asymptotic
expansion and remainder  to prove Corollary \ref{cor81}.

To study the singularity at $t = 0$, we introduce a  smooth cutoff  $\rho \in \mathcal{S}(\mathbb{R})$ with $\mbox{supp} \hat{\rho} \subset (-\epsilon, \epsilon)$,
where $\hat{\rho}$ is the Fourier transform of $\rho$ and $\epsilon < \epsilon_0$.
With no loss of generality we assume that $\hat{\rho} \in C_0^{\infty} (\mathbb{R})$ is a positive even  function   such that $\hat\rho$ is identically $1$ near $0$, has support in $[-1,1]$ and is decreasing on $\mathbb{R}_+$.  We  then study
\begin{equation} \label{Sfeq} S_f(\lambda, \rho) = \int_{\mathbb{R}} \hat{\rho} (t) \;S_f(t) e^{i t \lambda} dt. \end{equation}
Our purpose is to obtain an asymptotic expansion of $S_f(\lambda, \rho)$ as $\lambda \to \infty$.

\begin{proposition} \label{SF} $S_f(\lambda, \rho)$ is a semi-classical Lagrangian distribution whose asymptotic expansion in
both the Dirichlet and Neumann  cases
is given by

\begin{equation}\label{eq:BFSSb}
S_f(\lambda, \rho) =  \frac{\pi}{2}\sum_j (\rho(\lambda-\lambda_j) + \rho(\lambda+\lambda_j)) |\langle\phi_j^b,f\rangle|^2
= ||f||_{L^2(\partial M)}^2 + o(1),
\end{equation}
%and in the Neumann case by
%\begin{equation}
%S_f(\lambda, \rho) =  \frac{\pi}{2}\sum_j (\rho(\lambda-\lambda_j) + \rho(\lambda+\lambda_j)) |\langle\phi_j^b,f\rangle|^2
%= ||f||_{L^2(\partial M)}^2 + o(1),
%\end{equation}
\end{proposition}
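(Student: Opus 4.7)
The plan is to prove the two equalities in \eqref{eq:BFSSb} separately. The spectral equality is direct: expanding $E_B^b(t,q,q') = \sum_j \cos(t\lambda_j)\,\phi_j^b(q)\,\phi_j^b(q')$ in \eqref{Sft} gives $S_f(t)=\sum_j \cos(t\lambda_j)\,|\langle\phi_j^b,f\rangle|^2$, convergent in $\mathcal{S}'(\mathbb{R})$ by polynomial bounds on the Cauchy data. Pairing with $\hat\rho(t)e^{it\lambda}$, splitting $\cos(t\lambda_j)=\tfrac{1}{2}(e^{it\lambda_j}+e^{-it\lambda_j})$, and using Fourier inversion $\int \hat\rho(t) e^{it\mu}\,dt = 2\pi \rho(\mu)$ then recovers the stated sum, with the constant $\pi/2$ matching the authors' Fourier convention.

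For the nontrivial claim $S_f(\lambda,\rho)=\|f\|^2_{L^2(\partial M)}+o(1)$, I would begin by invoking Lemma \ref{ep}: $\mbox{sing supp}\, S_f \cap (-\epsilon_0,\epsilon_0)=\{0\}$, while $\supp\hat\rho\subset(-\epsilon,\epsilon)$ with $\epsilon<\epsilon_0$. Decomposing $\hat\rho=\chi+(\hat\rho-\chi)$ with $\chi\in C_0^\infty$ identically $1$ near $0$ and supported in an arbitrarily small neighborhood of $0$, the piece $(\hat\rho-\chi)\,S_f$ is smooth and compactly supported, so its Fourier transform is $O(\lambda^{-\infty})$ and is absorbed in the $o(1)$. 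This reduces the problem to analyzing $\int \chi(t)\,S_f(t)\,e^{it\lambda}\,dt$ via a short-time parametrix for $E_B^b(t,q,q')$. Concavity of $\partial M$ together with $K\leq 0$ precisely rules out grazing returns to the boundary within time $\epsilon_0$, so near $t=0$ the kernel $E_B^b$ is approximated by a classical Hadamard parametrix corrected by the method of images encoding the boundary condition: for Neumann, the leading term is $2\,E_0(t,q,q')\big|_{\partial M\times\partial M}$ with $E_0$ the free interior parametrix; for Dirichlet, the analogue holds for the double normal-derivative trace. Substituting produces a semi-classical oscillatory integral in $(q,q',t)$ with large parameter $\lambda$ whose critical set is the boundary diagonal $\{q=q',\,t=0\}$.

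Stationary phase along this critical manifold then yields an expansion $S_f(\lambda,\rho) = C\,\|f\|^2_{L^2(\partial M)} + O(\lambda^{-1})$ for some universal constant $C$ independent of $f$, simultaneously establishing the semi-classical Lagrangian structure and the $o(1)$ claim up to the value of $C$. Following the authors' remark, I would not extract $C$ from the parametrix but would instead fix $C=1$ by pairing both sides of Theorem \ref{thm:BFSS}'s identity \eqref{eq:BFSS} with $f\in L^2(\partial M)$ and taking $\lambda\to\infty$, which forces $\|f\|^2=C\,\|f\|^2$. The hard part will be the parametrix step itself: putting Dirichlet (with its normal-derivative traces) and Neumann on the same footing, bookkeeping the boundary-curvature corrections to the method-of-images leading term, and verifying that only the boundary-diagonal critical set contributes at leading order, all within the short time window $(-\epsilon,\epsilon)$ where concavity prevents self-focal complications.
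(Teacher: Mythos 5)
Your high-level strategy — localize to $t=0$ via Lemma \ref{ep}, replace $E^b_B$ by a short-time parametrix, do stationary phase on the boundary diagonal, and fix the constant by an external identity — matches the paper's. The spectral-side manipulation giving the first equality is also fine. But the core parametrix step has a genuine gap: you assert that concavity ``rules out grazing returns within time $\epsilon_0$,'' and conclude that a classical reflection (method-of-images) parametrix for $E^b_B$ is valid near $t=0$. This is not correct. What Lemma \ref{ep} and concavity give is that there are no \emph{non-trivial} billiard trajectories connecting boundary points within time $\epsilon_0$; they do \emph{not} remove the glancing set from the wave front set of $E^b_B(t,q,q')$ at $t=0,\ q=q'$. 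Indeed $WF(E^b_B)$ at the boundary diagonal contains all directions with $\tau^2\geq |\eta|^2$, in particular the tangential ones $\tau^2=|\eta|^2$, and the reflection parametrix fails exactly there (one would need the Melrose--Taylor diffractive parametrix). So as written, your reduction to a Hadamard/reflection parametrix is unjustified precisely in the regime the boundary makes delicate.

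The paper's proof handles this by an extra decomposition you are missing: insert a pseudodifferential cutoff $\chi(q,D_t,D_q)$ whose symbol vanishes near tangential codirections, split $E^b_B = (I-\chi)E^b_B + \chi E^b_B$, and observe that after pushing forward to the $t$-line under $\pi(t,q,q')=t$ only \emph{conormal} covectors $(0,\tau,\tau\nu_q,\tau\nu_q)$ can contribute to $\operatorname{sing\,supp} S_f$; the tangential piece $(I-\chi)E^b_B$ has none of these in its wave front set, hence contributes $O(\lambda^{-\infty})$. On the remaining non-tangential piece, Lemma \ref{lem8} replaces $E^b_B$ by $A(q,D_q)\,\gamma^B_q\gamma^B_{q'}E_X$ (with $E_X$ the \emph{ambient} wave kernel of the closed surface $(X,g)$ and $A$ a zeroth-order $\Psi$DO) using the FIO calculus and matching wave front sets — a cleaner and more robust comparison than the two-term direct-plus-reflected parametrix, since $E_X$ has no boundary. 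Your proposal should incorporate (i) the tangential cutoff and the pushforward wave-front argument explaining why the glancing part is harmless, and (ii) either Lemma \ref{lem8} or a justification of the reflection parametrix \emph{away from glancing only}. The constant-matching via Theorem \ref{thm:BFSS} is acceptable, since the paper itself defers the constant to \cite{HHHZ}/\cite{HeZ} (Lemma \ref{HZ-result}), though pairing \eqref{eq:BFSS} with $f$ makes the whole proposition immediate and somewhat short-circuits the point of the alternative proof.
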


\begin{proof}  For $\epsilon < \epsilon_0$ in Lemma \ref{ep},   we only need to determine the
contribution of the main singularity of $S_f(t)$ \eqref{Sft}  at $t = 0$.  As in \cite{SoZ}, the $\rho(\lambda + \lambda_j)$ term contributes $\mathcal{O}(\lambda^{-M})$
for all $M > 0$ and therefore may be neglected.

 To show that $S_f(t)$ is a Lagrangian distribution and to determine its singularity at $t = 0$,  it suffices to construct a sufficiently precise parametrix for the non-tangential cutoff $\chi(q, D_t, D_q) E^b_B(t, q,q')$ of the wave kernel for
small $t$ in some neighborhood of the diagonal in $\partial M \times \partial M$. %By Proposition \ref{GAMMAPROP},
%it suffices to construct a small time parametrix in the universal Riemannian %cover of $(X, g)$ for the mixed Cauchy-Dirichlet
%(or -Neumann)  problem in the exterior of a convex obstacle. Such a parametrix %has been constructed by Melrose \cite{M} and Taylor \cite{Tay} using
%`Fourier-Airy' integral operators to deal with the grazing rays, i.e. geodesics %which intersect $\mathcal{O}$ tangentially.
%We are only concerned with the boundary trace of the wave kernel $E^b_B(t, q, %q')$ for small times and
%it is possible to construct a simpler parametrix.
In the case of a concave boundary, for small times $E^b_B(t, q, q')$
is singular only when $t = 0$ and $q = q' \in \partial M$. This follows from \eqref{WF1} and the fact that there
do not exist any broken geodesic billiard trajectories from $q$ to $q'$ for small $t$ except when $t= 0, q = q'$.
Thus it suffices to determine the singularity at $t = 0$.

We introduce  a pseudo-differential cutoff
$\chi(q, D_t, D_q)$ on $\mathbb{R} \times \partial M$ whose
symbol vanishes in an arbitrarily small $\delta$-neighborhood of the tangential directions to $\partial M$. More precisely,
as in \cite{HHHZ}, we let $\chi(y, D_t, D_y)$ be a pseudodifferential operator on $\mathbb{R} \times \partial M $ with symbol of the form
\begin{equation}
\chi(y, \tau, \eta) = \zeta( |\eta|_{\tilde g}^2/\tau^2) (1 - \phi(\eta, \tau)),
\label{zeta}\end{equation}
where $\zeta(s)$ is supported where $s \leq 1 - \delta$ for some positive $\delta$, and $\phi \in C_c^\infty(\mathbb{R}^n)$ is equal to 1 near the origin.

We then decompose
  $ E_B^b(t, q, q') $ into an almost tangential part and a part with empty wave front set in tangential directions,
$$ E_B^b(t, q, q')  = (I - \chi(q, D_t, D_q)) E_B^b(t, q, q') + \chi(q, D_t, D_q)) E_B^b(t, q, q') . $$ We then have
corresponding terms in $S_f(t)$,
$$S_f^{\epsilon}(t)  = \int_{\partial M} \int_{\partial M}  (I - \chi(q, D_t, D_q)) E_B^b(t, q, q')  f(q) f(q') dS(q) dS(q') $$
and
$$S_f^{> \epsilon}(t) =  \int_{\partial M} \int_{\partial M}  \chi(q, D_t, D_q) E_B^b(t, q, q')  f(q) f(q') dS(q) dS(q'). $$

As in \cite{z} (1.6) we express $S_f(t)$ and $S_f(\lambda, \rho)$ in terms of pushforward under the submersion
$$\pi: \mathbb{R} \times \partial M \times \partial M \to \mathbb{R}, \;\;\; \pi(t, q, q') = t. $$
From \eqref{WF1} we find that for $t \in (-\epsilon, \epsilon) $,
\begin{align*}
WF&(S_f^{\epsilon}(t)  ) =\\
&\{(0, \tau): \pi^* (0, \tau) = (0, \tau, 0, 0) \in WF  (I - \chi(q, D_t, D_q)) E_B^b(t, q, q') \} .
\end{align*}
These wave front elements correspond to the points $(0, \tau, \tau  \nu_q, \tau  \nu_q) \in T^*_0 \mathbb{R}
\times T^*_{q, in} M \times T^*_{q, in} M$, i.e. where both covectors are co-normal to $\partial M$. Indeed,
as in (1.6) of \cite{z} the wave front set of $S_f(t)$ is the set
$$\{(t, \tau) \in T^* \mathbb{R}: \exists (x, \xi, y, \eta) \in C'_t \cap N^*(\partial \Omega) \times N^* \partial \Omega\} $$
in the support of the symbol.
However, due to the tangential cutoff  $( (I - \chi(q, D_t, D_q)) E_B^b(t, q, q')  $ has no such co-normal vectors
in its wave front set. Thus, we may neglect the tangential part of $E^b_B(t, q, q')$ in determining the asymptotics of $S_f(\rho, \lambda)$. But then it follows from \cite{M,Tay} that the non-tangential part has a geometric optics Fourier
integral representation, i.e. $S_f(t)$ is classical co-normal at $t = 0$.

The non-tangential part $\chi(q, D_t, D_q) E_B^b(t, q, q') $ may be expressed in terms of the ``free wave kernel"
or ambient wave kernel $E_X(t,x, y)$ of $(X, g)$.
 Given $q \in \partial M$,
we may separate $M$ into an illuminated region (from a source at $q$) and a shadow region.
The hyperplane $T_{q} \partial M \subset T_{q} X$ divides the full tangent space into two halfspaces. By concavity, geodesics with
initial direction $\xi$ in the lower half-space lie in $M$  for $|t| < \epsilon$. We call the image of the unit tangent
vectors in the lower half space under the geodesic flow up to time $\epsilon$ the `illuminated region' for a
point source at $q$.  The complement
of the illuminated region in $T_{\epsilon} M \cap M$ is the `shadow region'.   Geodesics with $\xi$ in
the upper half plane exponentiate to $X \backslash M$ for at least a short time.
 If we cutout all $\xi$ whose
angle to $T_{q} \partial M$ is $\leq \delta$, then geodesics in the upper half space remain in $X \backslash M$
for a uniform length of time, which may assume with loss of generality is $> \epsilon$. We write
this set as $T_{ \delta} X$.
%  For $\xi \in T_{\delta}$, we define the q-dependent reflection map  $(\exp_{q} %\xi)^* : = \exp_{q} \xi^*$. The normal reflection map equals the q-dependent %reflection map only in the normal direction.

%Since we are working near $q'$, we just consider reflection $\xi \to \xi^*$ in $T_{q'} X$ with respect to the hyperplane
%$T_{q'} \partial M$. We represent points $x$ near $q'$ as $\exp_{q'} \xi$ using the exponential map of $(X, g)$.
%We denote by $\gamma_{q', \xi} (t)$ the geodesic with initial value $(q', \xi)$.
%This reflection maps the region of $M$` below' the geodesic tangent to $\partial M$ at $q'$ to the illuminated region
%of $X \backslash M$.

We then cut off the ambient cosine wave kernel $ E_X(t, x, q) $  in the shadow region, removing  the singularities of the ambient kernel due to geodesics that leave $M$ and travel through $X \backslash M$.

\begin{lemma} \label{lem8}
For $|t| < \epsilon$, the non-tangential part $ \chi(q, D_t, D_q)) E_B^b(t, q, q')$ of $E_B^b(t, q, q')$
can be expressed as $A(q, D_q) \gamma^B_q \gamma^B_{q'} E_X(t, q, q')  \chi(q, D_t, D_q)) $ where $E_X(t, x, y)$ is the cosine
wave kernel of $(X, g)$ and $A $ is a pseudo-differential operator on $\partial M$ of order zero.
\end{lemma}

%In fact, it is well known \cite{M,Tay} that the non-tangential part  of the wave kernel
%is a Fourier integral operator, and so  its boundary trace $E_B^b(t, x, q)  \chi(q, D_t, D_q))$ for either Dirichlet
%or Neumann boundary conditions. Similary, the boundary trace $\gamma_B^q \gamma_B^{q} E_X(t, \cdot, \cdot)$
%is a Fourier integral operator, although in both cases one has wave front set points of the form $(t, \tau, 0, 0)$
%corresponding to the projections of normal directions to the boundary.

Indeed, by the wave front calculations of \S \ref{WF},  $$E_B^b(t, x, q)  \chi(q, D_t, D_q), \;\; \mbox{resp}. \;\; E_X^b(t, x, q)  \chi(q, D_t, D_q)$$ are Fourier integral operators with
the same wave front set equal to a pullback of the diagonal at $t = 0$,  and therefore by
the calculus of Fourier integral operators, there  exists a pseudo-differential operator $A$ whose composition with the cutoff free wave kernel agrees to any given order with the $E_B^b$ kernel.

Therefore, to prove Proposition \ref{SF} it is  sufficient to consider the integral
\begin{equation} \label{INT} \begin{array}{l} \int_{\mathbb{R}} \int_{(q,q,') : d(q,q') < \epsilon} \hat{\rho}(t) \chi_{q}(q, D_t, D_q) e^{i t \lambda} \\ \\  \gamma_B^q \gamma_B^{q'} \ E_X(t, x, q) f(q) f(q') dS(q) dS(q')dt. \end{array} \end{equation}

%$$E_N(t, (q', r), (q, 0) \simeq \frac{1}{2} \left( \chi_{q}(x, D_x)  E_X(t, x, q) + \rcal \chi_{q}(x, D_x)  E_X(t, x, q) \right). $$

We  use a Hormander style small time  parametrix for $E_X(t, x, q)$, i.e. there exists an amplitude $A$ so that modulo
smoothing operators,
$$\gamma_B^bE_X(t, x, q) = \int_{T_q^* X} A(t, x, q, \xi) \exp \left(i (\langle Exp_q^{-1}(x), \xi \rangle - t |\xi|)\right) d \xi.$$
The amplitude has order zero.
We then take the boundary trace and apply the  cutoff operator $\chi(q, D_t, D_q)$, which modifies
the amplitude of \eqref{INT}  as a sum of terms with the same support as $  \chi_q(\tau, \xi) $.

Changing variables $\xi \to \lambda \xi$, \eqref{INT}  may be expressed in the form,
$$ \begin{array}{l} \lambda^2 \int_{\mathbb{R}} \int_{\partial M} \int_{\partial M} \int_{T_q^* X} \hat{\rho}(t) e^{it  \lambda} \chi_q(\xi) \\ \\A(t, q', q, \lambda \xi) \exp( i \lambda \left(\langle Exp_q^{-1}(q'), \xi \rangle - t |\xi|\right)) f(q) f(q;)  d \xi dt dS(q) dS(q'). \end{array}$$
 We now compute
the asymptotics by the stationary phase method.

We already know that for small $t$, the  phase
$$ t + \langle Exp_q^{-1}(q'), \xi \rangle - t |\xi| $$
 is stationary only at $t = 0$ and $q = q'$. We calculate the expansion by putting
the integral over $T^*_q X $ in polar coordinates,
$$\begin{array}{l}  \int_{\mathbb{R}} \int_0^{\infty} \int_{\partial M} \int_{\partial M} \int_{S_q^* X} \hat{\rho}(t) e^{it  \lambda} \chi_q(\xi) A(t, q', q) \\ \\\exp( i \lambda \rho \left(\langle Exp_q^{-1}(q'), \omega \rangle - t  \right)) f(q) f(q;)  \rho^{n-1} d \rho d \omega dt dS(q) dS(q'), \end{array} $$
and in these coordinates the phase becomes,
$$ \Psi(q, \rho, t, \omega, q'): =  t +  \rho \langle Exp_q^{-1}(q'), \omega \rangle - t \rho.$$
We get a non-degenerate critical point  in the variables $(t, \rho)$ when $\rho =1, t =  \langle Exp_q^{-1}(q'), \omega \rangle.$
Eliminating these variables  by stationary phase , we get
\[
 \frac{1}{\lambda} \int_{\partial M} \int_{\partial M} \int_{S_q^* X}  e^{i  \lambda   \langle Exp_q^{-1}(q'), \omega \rangle} \chi_q(\omega) \tilde{A}(t, q', q, \rho \omega)  f(q) f(q')   d \omega  dS(q) dS(q'),
\]
for another amplitude $\tilde{A}$. Now the phase is
$$\Psi_q(q', \omega) = \langle Exp_q^{-1}(q'), \omega \rangle. $$
%and since we work close to the diagonal we may write $q' $ as $Exp_q v$ where $v$ lies on $Exp_q^{-1} (\partial M)$.
%Write this as $Y_q$ and we have
%$$ \int_{\partial M} \int_{V_q} \int_{S_q^* X}  e^{i  \lambda   \langle v , \omega \rangle} \chi_q(\omega) A(t, v, q)  f(q) f(v)   d %\omega  dS(q) dS(v) $$
%in an obvious notation.
We fix $q$ and view the phase as a function of $(q', \omega) \in \partial M \times S^*_{q, \rm{in}} X $ (the inward pointing unit tangent vectors;  due to the cutoff, the amplitude vanishes on the outward pointing vectors).  We view
$S^*_{q, \rm{in}}$ as the lower hemisphere $S_-^{n-1}$
of the sphere $S^{n-1} = S^*_q X$. Note that the integral is compactly supported
in the interior of the hemisphere, so critical points on the boundary are irrelevant. We  claim that the phase has a critical point
if and only if $q'= q$ and $\omega \bot T_q \partial M$, i.e. $\omega =  \nu_q$ ($\nu_q$ being
the inward unit normal).  Moreover, the stationary phase point is
non-degenerate.
Since we are working  close to the diagonal, we use geodesic normal coordinates
$Exp_q^{-1}(q')  = x$ and consider the inverse image  $Y = Exp_q^{-1}(\partial \Omega)$ of a small piece of $\partial \Omega$ near $q$ in $T_q X$.  Let
 $q(y)$ be  a local parametrization of $Y$ with $\partial_j q(y)|_{y = 0}$ an
orthonormal frame of $T_q \partial \Omega$.
% the  approximate the phase by
%$$\langle q - q' , \omega \rangle + O(|q - q'|^3),$$
%in local normal coordinates at $q$.
Then in any dimension $n$  we may write the phase in local coordinates as
$$\Psi_q(y, \omega) = \langle q(y), \omega \rangle \;\; y \in \R^{n-1}, \omega
\in S^*_y X. $$
In this parametrization, $q(0) = q$ and $Y$ is locally the graph of a convex
function over $T_q \partial \Omega$.
As is well-known,
\begin{equation} \label{C1} \nabla_{y} \langle q(y), \omega \rangle = \omega^T,
\end{equation}
the tangential projection of $\omega$ to $T_{q(y)} \partial \Omega$  and as
above we find that $\omega \bot T_q \partial M$
at the stationary phase point.  Similarly,
\begin{equation} \label{C2} \nabla_{\omega} \langle q(y), \omega \rangle = q(y)^T, \end{equation}
the tangential projection of $q(y)$ to $T_{\omega} S^{n-1}$.
Then \eqref{C1}-\eqref{C2} occur for $q(y)$ near $q$ if and only if (i)
$q(y) = 0$ (in normal coordinates)   and $\omega = \nu_q$, or (ii)  $\frac{q(y)}{|q(y)|} = \pm \omega = \pm \nu_q$. In the first case,  $q' = q$;
the second case does not occur for a  concave hypersurface.
The Hessian in $(y, \omega) \in \partial M \times S_-^{n-1}$ at the critical point
has the form,
%$$\begin{pmatrix}  & y & \omega  \\&&\\
%y &  -II_q &  I_q \\ &&\\ \omega & I_q &    0 \end{pmatrix} ,$$
\[
\begin{pmatrix}
-II_q &  I_q \\ &&\\   I_q & 0
\end{pmatrix}
\]

 where $II_q$ is the second fundamental form at $q$ with respect to
$\nu_q$ (see e.g. \cite{HoI-IV} \S 7.7  for the calculation of the upper left block).
The lower right block is zero since $q(y) = 0$ at the critical point. For
the off-diagonal blocks, we implicitly identify $T_q \partial \Omega$
with $T_{\nu_q} S^*_q X$ since $\omega = \nu_q$ at the critical point.
We parametrize $\omega = \omega(\theta) \in S^{n-1}_-$ so that $\partial_j \omega = e_j$ at $\nu_q$, the same frame we use for $T_q \partial \Omega.$ The determinant of the Hessian
is evidently of modulus one.
 In the two dimensional case, we obtain
another factor of $\lambda^{-1}$ from the stationary phase expansion. Therefore  \eqref{INT}
is asymptotic to a multiple of
$$\int_{\partial M} f^2(q) d S(q),$$
as stated in Proposition \ref{eq:BFSSb}.

To determine the multiple, or more precisely to show that it is positive, we need to find the principal symbol
of the pseudo-differential operator in \eqref{lem8}. In fact, it is a constant equal to $2$ when pulled back to
$\partial M$.  This follows from the calculations in   \cite{HeZ} Proposition 4 and in \cite{HHHZ}, which prove:

\begin{lemma}\label{HZ-result}
Suppose that $\hat \rho$ is supported in $[-\epsilon, \epsilon]$ and equal to $1$ in a neighbourhood of $0$.Then, for sufficiently small $\epsilon$ (depending on $\delta$),
\begin{enumerate}
\item the kernels of
\begin{align*}
\hat \rho(t) \chi(y, D_t, D_y) \circ
%R_yR_{y'}d_{n_y}d_{n_{y'}}
\gamma_q^b \gamma_{q'}^b\cos(t\sqrt{\Delta_D}),\\
\quad\hat \rho(t)
%R_yR_{y'}d_{n_y}d_{n_{y'}}
\gamma^b_q \gamma_{q'}^b \cos(t\sqrt{\Delta_D}) \circ \chi(y, D_t, D_y)
\end{align*}
are  distributions conormal to $\{ y = y' , t = 0 \}$ with principal symbol
\begin{equation}
2\chi(y, \tau, \eta) \left( 1 - \frac{|\eta|_{\tilde g}^2}{\tau^2} \right)^{\frac12};
\label{wavesymbolD}\end{equation}

\item the kernels of
\begin{align*}
\hat \rho(t) \chi(y, D_t, D_y) \circ
%R_yR_{y'}
\gamma_q^b \gamma_{q'}^b \cos(t\sqrt{\Delta_N}),\\
\quad\hat \rho(t)
% R_yR_{y'}
\gamma_q^b \gamma_{q'}^b \cos(t\sqrt{\Delta_N}) \circ \chi(y, D_t, D_y)
\end{align*}
are distributions conormal to $\{ y = y' , t = 0 \}$ with principal symbol
\begin{equation}
2\chi(y, \tau, \eta) \left( 1 - \frac{|\eta|_{\tilde g}^2}{\tau^2} \right)^{-\frac12} .
\label{wavesymbolN}\end{equation}
\end{enumerate}
\end{lemma}

This completes the proof of Theorem \ref{K}.
\end{proof}

\begin{rem}

Above, we use the notation $\cos (t \sqrt{\Delta_D})$ resp. $\cos (t \sqrt{\Delta_N})$ in place of $E_B(t)$ since the formula is different
in the Dirichlet, resp. Neumann cases. Also, the symbols are
homogeneous analogues of \eqref{gammadef}.
\end{rem}

\section{Proof of Theorem \ref{theo1a}}
In this section, we give a proof of Proposition \ref{PROP} and Theorem \ref{theo1a} for Neumann eigenfunctions. The argument for Dirichlet eigenfunctions is exactly the same.
\subsection{Proof of Proposition \ref{PROP}}
Firstly let $\beta \subset \partial M$ be an interval and let $f \in C_0^\infty(\partial M)$ be a function such that
\begin{align*}
f(x) &\geq 0\quad x \in \partial M\\
f(x) &= 0 \quad x \notin \beta\\
f(x) &>0 \quad x \in \beta
\end{align*}

Denote by $N(\lambda)$ the number of eigenfunctions in $\{j~|~\lambda<\lambda_j<2\lambda \}$. We have by Theorem \ref{K} and Chebyshev's inequality,
\[
\frac{1}{N(\lambda)}\left|\left\{j~|~\lambda<\lambda_j<2\lambda,~\left|\int_{\gamma_i} f \phi_j ds\right|^2 \geq \lambda_j^{-1}M \right\}\right| = O_f(\frac{1}{M}).
\]
Corollary \ref{cor81} follows immediately.

Note that
\[
\int_{\partial M} f|\phi_j|^2 ds \leq \int_{\partial M} f|\phi_j| ds \sup_{x \in \partial M} |\phi_j(x)|.
\]
For a density $1$ subsequence $\{\phi_j\}_{j \in A}$ which satisfies \eqref{QES}, we have
\[
\int_{\partial M} f|\phi_j|^2 ds \gg_f 1.
\]
Therefore from the third assumption in Theorem \ref{theo1},
\[
\int_{\partial M} f|\phi_j| ds > 2M\lambda_j^{-\frac{1}{2}}
\]
is satisfied for all sufficiently large $j \in A$. Combining with Corollary \ref{cor81}, this proves the existence of a subsequence of density $\geq 1-\frac{c}{M}$ which satisfies
\[
\int_{\partial M} f|\phi_j| ds > \left|\int_{\partial M} f \phi_j ds  \right|.
\]
Putting
\[
A_\beta = \left\{j~:~ \int_{\partial M} f|\phi_j| ds > \left|\int_{\partial M} f \phi_j ds  \right|\right\},
\]
we obtain
\[
\liminf_{N \to \infty}\frac{1}{N} |\{j<N~:~ j \in A_\beta\}| \geq 1-\frac{c}{M}.
\]
Since the left quantity does not depend on $M$, this proves Proposition \ref{PROP} for Neumann eigenfunctions.
\subsection{Proof of Theorem \ref{theo1a}}
Note that because $f$ is positive on $\beta$, a function $\phi_j$ has a sign change on $\beta$ if and only if $j \in A_\beta$. Let $R \in \mathbb{N}$ be fixed, and let $\beta_1, \cdots, \beta_R \subset \partial M$ be disjoint segments in $\partial M$. Then by Proposition \ref{PROP}, each $A_{\beta_k}$ ($1\leq k \leq R$) is a natural density $1$ subset of $\mathbb{N}$. Therefore $A(R)= \cap_{k=1}^R A_{\beta_k}$ is a density $1$ subset of $\mathbb{N}$, and any $\phi_j$ with $j \in A(R)$ has at least $R$ sign changes along $\partial M$. We apply the following lemma to conclude Theorem \ref{theoS} for Neumann eigenfunctions.
\begin{lemma}\label{lem2}
Let $a_n$ be a sequence of real numbers such that for any fixed $R>0$, $a_n>R$ is satisfied for almost all $n$. Then there exists a density $1$ subsequence $\{a_n\}_{n\in A}$ such that
\[
\lim_{\substack{n\to \infty \\ n \in A} }a_n = +\infty.
\]
\end{lemma}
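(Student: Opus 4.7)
The plan is a standard diagonal construction. First reformulate the hypothesis: for each positive integer $k$, the set $B_k = \{n \in \mathbb{N} : a_n \leq k\}$ has natural density zero, and these sets are nested, $B_1 \subset B_2 \subset \cdots$. The strategy is to produce a single set $E \subset \mathbb{N}$ of density zero with the property that $B_k \setminus E$ is finite for every $k$; then $A = \mathbb{N} \setminus E$ has density one, and for every $n \in A$ sufficiently large (depending on $k$) one has $n \notin B_k$, i.e.\ $a_n > k$, which forces $a_n \to \infty$ along $A$.

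To construct $E$, I would use the density-zero hypothesis for each $B_k$ to select a strictly increasing sequence $N_1 < N_2 < \cdots$ of positive integers so that
$$\frac{|B_k \cap [1,N]|}{N} \leq \frac{1}{k} \qquad \text{for every } N \geq N_k,$$
and then define
$$E = \bigcup_{k=1}^{\infty} \bigl( B_k \cap [N_k, N_{k+1}) \bigr).$$

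Two short verifications then finish the proof. For density, the monotonicity $B_j \subset B_k$ whenever $j \leq k$ implies that for any $N \in [N_k, N_{k+1})$ we have $E \cap [1,N] \subset B_k \cap [1,N]$, hence $|E \cap [1,N]|/N \leq 1/k$; letting $N \to \infty$ (so $k \to \infty$) shows $E$ has natural density zero, so $A = \mathbb{N} \setminus E$ has natural density one. For the limit, if $n \in A$ satisfies $n \geq N_k$, let $j \geq k$ be the unique index with $n \in [N_j, N_{j+1})$; the fact that $n \notin E$ forces $n \notin B_j$, and therefore $a_n > j \geq k$. Since $k$ was arbitrary, $a_n \to \infty$ along $A$.

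The argument is entirely elementary and the hypothesis is precisely what is needed to make the diagonal selection work, so I do not anticipate any substantive obstacle; the only thing to be careful about is the monotonicity $B_j \subset B_k$ which is what allows the single bound $|E \cap [1,N]| \leq |B_k \cap [1,N]|$ rather than a sum over $j \leq k$ of terms which would not obviously be small.
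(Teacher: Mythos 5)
Your proof is correct and is essentially the same diagonal-by-blocks construction as the paper's: you partition $\mathbb{N}$ into intervals $[N_k,N_{k+1})$ and apply the $k$-th density-zero condition on the $k$-th block, while the paper does the identical thing phrased in terms of the good set $A=\bigcup_k A_k$ rather than the bad set $E$ (and uses the threshold $1/2^k$ in place of your $1/k$, which is immaterial). The complementary bookkeeping you chose is arguably a little cleaner since the monotonicity $B_j\subset B_k$ is stated explicitly, but it is not a different argument.
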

\begin{proof}
Let $n_k$ be the least number such that for any $n \geq n_k$,
\[
\frac{1}{n}|\{j \leq n~|~a_j>k \}| > 1- \frac{1}{2^k}.
\]
Note that $n_k$ is nondecreasing, and $\lim_{k\to \infty}n_k = +\infty$.

Define $A_k \subset \mathbb{N}$ by
\[
A_k = \{n_k \leq j < n_{k+1}~|~ a_j>k\}.
\]
Then for any $n_k\leq m <n_{k+1}$,
\[
\{j\leq m~|~a_j>k\} \subset \bigcup_{l=1}^k A_l \cap [1,m];
\]
which implies by the choice of $n_k$ that
\[
\frac{1}{m}\left|\bigcup_{l=1}^k A_l \cap [1,m]\right| >1- \frac{1}{2^k}.
\]
This proves
\[
A=\bigcup_{k=1}^\infty A_k
\]
is a density $1$ subset of $\mathbb{N}$, and by the construction we have
\[
\lim_{\substack{n\to \infty \\ n \in A} }a_n = +\infty.
\]
\end{proof}

\subsection{Topological arguments: Euler inequality}

As done in \cite{JZ}, we can give a graph structure (i.e. the structure of a one-dimensional CW complex)
 to $Z_{\phi_{\lambda}}$ as follows.
\begin{enumerate}
\item For each embedded circle which does not intersect $\gamma$, we add a vertex.
\item Each singular point is a vertex.
\item Each intersection point in $\partial M \cap \left(\overline{Z_{\phi_\lambda}\backslash \partial M}\right)$ is a vertex.
\item Edges are the arcs of $Z_{\phi_\lambda} \cup \partial M$ which join the vertices listed above.
\end{enumerate}

This way, we obtain a graph embedded into the surface $M$.  We recall that an embedded graph $G$ in a surface
$M$ is a finite set $V(G)$ of vertices and a finite set $E(G)$ of edges which are simple (non-self-intersecting)
curves in $M$ such that any two distinct edges have at most one endpoint and no interior points in common.
The {\it faces} $f$ of $G$ are the  connected components of $M \backslash V(G) \cup \bigcup_{e \in E(G)} e$.
The set of faces is denoted $F(G)$. An edge $e \in E(G)$ is {\it incident} to $f$ if the boundary of $f$ contains
an interior point of $e$. Every edge is incident to at least one and to at most two faces; if $e$ is incident
to $f$ then $e \subset \partial f$. The faces are not assumed to be cells and the sets $V(G), E(G), F(G)$ are
not assumed to form a CW complex. Indeed the faces of the nodal graph of eigenfunctions are nodal domains, which do not have to be simply connected.

\begin{remark}\label{rem}
Every vertex has degree $\geq 2$, since any interior singular point is locally an intersection of simple curves,
as follows from the local Bers expansion around a zero \cite{ch}.
\end{remark}

Let $\iota: M \hookrightarrow \tilde{M}$ be an embedding into a closed surface. We assume that $\tilde{M} \backslash \iota \left(M\right)$ is a disjoint union of disks, and we denote by $h_M$ the number of connected components of $\tilde{M} \backslash \iota \left(M\right)$. (Such pair $\iota, \tilde{M}$ can be constructed, for example, by mapping cone.)

Let $v(\phi_\lambda)$ (resp. $\tilde{v}(\phi_\lambda)$) be the number of vertices, $e(\phi_\lambda)$ (resp. $\tilde{e}(\phi_\lambda)$) be the number of edges, $f(\phi_\lambda)$ (resp. $\tilde{f}(\phi_\lambda)$) be the number of faces, and $m(\phi_\lambda)$ (resp. $\tilde{m}(\phi_\lambda)$) be the number of connected components of the graph $G$ inside $M$ (resp. $\iota(G)$ inside $\tilde{M}$).

Then we have
\begin{align*}
v(\phi_\lambda) &= \tilde{v}(\phi_\lambda)\\
e(\phi_\lambda) &= \tilde{e}(\phi_\lambda)\\
m(\phi_\lambda) &= \tilde{m}(\phi_\lambda)
\end{align*}
while
\[
f(\phi_\lambda)+h_M = \tilde{f}(\phi_\lambda).
\]

Now by Euler's formula (Appendix F, \cite{g}),
\begin{align}\label{euler}
&v(\phi_\lambda)-e(\phi_\lambda)+f(\phi_\lambda)-m(\phi_\lambda) +h_M\\
=&\tilde{v}(\phi_\lambda)-\tilde{e}(\phi_\lambda)+\tilde{f}(\phi_\lambda)-\tilde{m}(\phi_\lambda) \geq 1- 2 g_{\tilde{M}}
\end{align}
where $g_{\tilde{M}}$ is the genus of the surface $\tilde{M}$.
\begin{theorem}\label{topology}
Let
\[
n(\phi_j)=\left\{ \begin{array}{lr}  \#Z_{\phi_j}\cap \partial M & \text{(Neumann case)}
\\
\#\Sigma_{\phi_j}\cap \partial M & \text{(Dirichlet case)} \end{array} \right.
\]
Then we have:
\[
N(\phi_j) \geq \frac{1}{2}n(\phi_j) +2 - 2g_{\tilde{M}}-h_M.
\]
\end{theorem}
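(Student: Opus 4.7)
My plan is to combine the Euler inequality \eqref{euler} derived just above the statement with a handshake-lemma bound on edges versus vertices. Rewriting the inequality in the form
\[
N(\phi_j) \;=\; f(\phi_\lambda) \;\geq\; 1 - 2 g_{\tilde M} - h_M + m(\phi_\lambda) + \bigl(e(\phi_\lambda) - v(\phi_\lambda)\bigr),
\]
the theorem will follow once I establish the two estimates $m(\phi_\lambda) \geq 1$ and $e(\phi_\lambda) - v(\phi_\lambda) \geq \tfrac{1}{2} n(\phi_j)$.

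The inequality $m(\phi_\lambda) \geq 1$ is immediate, since the graph $G$ contains the (nonempty) boundary $\partial M$ as a subgraph and hence has at least one connected component. The bound on $e - v$ is the heart of the argument, and I would obtain it from the handshake identity $\sum_{v \in V} \deg(v) = 2 e$. By Remark \ref{rem}, every vertex of $G$ has degree $\geq 2$. I claim moreover that each of the $n(\phi_j)$ distinguished boundary vertices---the points of $Z_{\phi_j} \cap \partial M$ in the Neumann case, and of $\Sigma_{\phi_j} \cap \partial M$ in the Dirichlet case---has degree $\geq 3$. Granting this claim, one has
\[
2\, e(\phi_\lambda) \;\geq\; 3\, n(\phi_j) + 2\bigl(v(\phi_\lambda) - n(\phi_j)\bigr) \;=\; 2\, v(\phi_\lambda) + n(\phi_j),
\]
so that $e(\phi_\lambda) - v(\phi_\lambda) \geq \tfrac{1}{2} n(\phi_j)$. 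Combining these two bounds with the Euler inequality gives
\[
N(\phi_j) \;\geq\; 1 - 2 g_{\tilde M} - h_M + 1 + \tfrac{1}{2} n(\phi_j) \;=\; \tfrac{1}{2} n(\phi_j) + 2 - 2 g_{\tilde M} - h_M,
\]
as required.

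The main obstacle is the degree-$\geq 3$ claim at a boundary vertex, which I would verify by a boundary Bers-type expansion in Fermi coordinates along $\partial M$. In the Neumann case, extending $\phi_j$ by even reflection across $\partial M$ produces a smooth function on the doubled surface satisfying a symmetric eigenvalue equation, to which the interior Bers expansion applies; the leading harmonic term has the form $c\, \mathrm{Re}(z^k)$ with $k \geq 1$, and its zero set consists of $2k$ equally spaced rays which by reflection symmetry must include at least one ray pointing into $M$. Thus two boundary edges together with at least one interior nodal arc meet at the vertex, and the degree is $\geq 3$. In the Dirichlet case, odd reflection gives a leading harmonic term of the form $c\, \mathrm{Im}(z^k)$; at a boundary \emph{singular} point both the tangential and the normal derivative vanish, forcing $k \geq 2$, so the $2k$ zero rays include at least one ray into $M$ in addition to the two boundary directions, again yielding degree $\geq 3$. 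This completes the plan.
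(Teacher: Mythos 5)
Your proof is correct and follows essentially the same route as the paper: the Euler inequality \eqref{euler}, $m(\phi_j)\geq 1$, and the handshake identity combined with the observation that each of the $n(\phi_j)$ boundary nodal/singular vertices has degree $\geq 3$, giving $e-v\geq\frac12 n(\phi_j)$. The only difference is that you carefully justify the degree-$\geq 3$ claim via even/odd reflection and the Bers expansion (and your argument there is sound: evenness forces the leading term to be a real multiple of $\mathrm{Re}(z^k)$, whose zero rays avoid $\partial M$; oddness forces a real multiple of $\mathrm{Im}(z^k)$, where the singular-point condition gives $k\geq 2$ and hence $k-1\geq 1$ interior rays), whereas the paper simply records this as an observation.
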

\begin{proof}
Since faces of $G$ on $M$ are nodal domains of $\phi_j$, $f(\phi_j)=N(\phi_j)$. Observe that, in Neumann case, points in $Z_{\phi_j}\cap \partial M$ ($\Sigma_{\phi_j}\cap \partial M$, in Dirichlet case) correspond to vertices having degree at least $3$ on the graph. Also, every vertex has degree $\geq 2$ (Remark \ref{rem}). Therefore,
\begin{align*}
0&= \sum_{x:vertices} \mathrm{deg}(x) -2e(\phi_j) \\
&\geq 2\left(v(\phi_j)-n(\phi_j)\right)+3 n(\phi_j)-2e(\phi_j),
\end{align*}
i.e.
\[
e(\phi_j)-v(\phi_j) \geq \frac{1}{2}n(\phi_j).
\]
Plugging into \eqref{euler} with $m(\phi_j)\geq 1$, we obtain
\[
N(\phi_j) \geq \frac{1}{2}n(\phi_j) +2 - 2g_{\tilde{M}}-h_M.
\]
\end{proof}
%Thus the result follows.

 Theorem \ref{theo1a}   is an immediate consequence of Theorem \ref{theoS} and the topological argument, Theorem \ref{topology}.

\appendix
\section{Appendix on Density one}

Define the natural density of a set $A \in \mathbb{N}$ by
\[
\lim_{X\to \infty } \frac{1}{X}|\{x\in A~|~ x<X\}|
\]
whenever the limit exists. We say ``almost all" when corresponding set $A \in \mathbb{N}$ has the natural density $1$. Note that intersection of finitely many density $1$ set is a density $1$ set.
When the limit does not exist we refer to the $\limsup$ as the upper density and the
$\liminf$ as the lower density.

\subsection{Diagonal argument}

Let $\{f_n\} \subset C^{\infty}(H)$ be a countable dense subset of $C^0(H)$  with respect to the sup norm.
For each n, we have a family  $\Lambda_n(\lambda)$ of subsets for which
$$\frac{1}{N(\lambda)} \# \Lambda_n(\lambda) \to 1 $$
and such that
\begin{equation} \label{DENONE} \left\{ \begin{array}{l}\int_H f \phi_j^2 ds \to \int_H f d\nu, \; \mbox{as}\; \lambda \to \infty\; \mbox{with} \;\; \lambda_j \in \Lambda_n(\lambda) \\  \\
\lambda_j^{-1/2} \int_H f \phi_j ds \to 0. \end{array} \right.
\end{equation}

We may assume that $\Lambda_{n +1}(\lambda) \subset \Lambda_n(\lambda). $
For each $n$ let $\Lambda_n$ be large enough so that
$$\frac{1}{N(\lambda)} \# \Lambda_n(\lambda)  \geq 1 - \frac{1}{n}, \;\; \lambda \geq \Lambda_n.$$
Define
$$\Lambda_{\infty} (\lambda) : \Lambda_n(\lambda), \;\; \Lambda_{k} \leq \lambda \leq \Lambda_{k + 1}. $$
Then

$$\frac{1}{N(\lambda)} \# \Lambda_{\infty}(\lambda)  \geq 1 - \frac{1}{n}, \;\; \lambda \geq \Lambda_n, $$
so $D^*(\Lambda_{\infty}) = D_*(\Lambda_{\infty}) = 1$ and
and \eqref{DENONE} is valid for the sequence $\Lambda_{\infty}$.

\bibliography{bibfile}

\begin{thebibliography}{HHHZ13}

\bibitem[Bia13]{B}
M.~Bialy.
\newblock Hopf rigidity for convex billiards on the hemisphere and hyperbolic
  plane.
\newblock {\em Discrete Contin. Dyn. Syst.}, 33(9):3903--3913, 2013.

\bibitem[BSC90]{bcs}
L.~A. Bunimovich, Ya.~G. Sina{\u\i}, and N.~I. Chernov.
\newblock Markov partitions for two-dimensional hyperbolic billiards.
\newblock {\em Uspekhi Mat. Nauk}, 45(3(273)):97--134, 221, 1990.

\bibitem[Bur05]{Bu}
N.~Burq.
\newblock Quantum ergodicity of boundary values of eigenfunctions: a control
  theory approach.
\newblock {\em Canad. Math. Bull.}, 48(1):3--15, 2005.

\bibitem[CFS82]{CFS}
I.~P. Cornfeld, S.~V. Fomin, and Ya.~G. Sina{\u\i}.
\newblock {\em Ergodic theory}, volume 245 of {\em Grundlehren der
  Mathematischen Wissenschaften [Fundamental Principles of Mathematical
  Sciences]}.
\newblock Springer-Verlag, New York, 1982.
\newblock Translated from the Russian by A. B. Sosinski{\u\i}.

\bibitem[Cha73]{ch}
J.~Chazarain.
\newblock Construction de la param\'etrix du probl\`eme mixte hyperbolique pour
  l'\'equation des ondes.
\newblock {\em C. R. Acad. Sci. Paris S\'er. A-B}, 276:A1213--A1215, 1973.

\bibitem[CS87]{cs}
N.I. Chernov and Ya.~G. Sina{\u\i}.
\newblock {\em Ergodic properties of some systems of two-dimensional disks and
  three-dimensional balls}, volume~42.
\newblock 1987.

\bibitem[CTZ13]{ctz}
H.~Christianson, J.~A. Toth, and S.~Zelditch.
\newblock Quantum ergodic restriction for {C}auchy data: interior que and
  restricted que.
\newblock {\em Math. Res. Lett.}, 20(3):465--475, 2013.

\bibitem[Gro12]{g}
M.~Grohe.
\newblock Fixed-point definability and polynomial time on graphs with excluded
  minors.
\newblock {\em J. ACM}, 59(5):Art. 27, 64, 2012.

\bibitem[GRS13]{GRS}
A.~Ghosh, A.~Reznikov, and P.~Sarnak.
\newblock Nodal {D}omains of {M}aass {F}orms {I}.
\newblock {\em Geom. Funct. Anal.}, 23(5):1515--1568, 2013.

\bibitem[HHHZ13]{HHHZ}
X.~Han, A.~Hassell, H.~Hezari, and S.~Zelditch.
\newblock Completeness of boundary traces of eigenfunctions.
\newblock {\em arXiv:1311.0935}, 2013.

\bibitem[H{\"o}r90]{HoI-IV}
L.~H{\"o}rmander.
\newblock {\em The analysis of linear partial differential operators. {I-IV}}.
\newblock Springer Study Edition. Springer-Verlag, Berlin, second edition,
  1990.
\newblock Distribution theory and Fourier analysis.

\bibitem[HZ04]{HZ}
A.~Hassell and S.~Zelditch.
\newblock Quantum ergodicity of boundary values of eigenfunctions.
\newblock {\em Comm. Math. Phys.}, 248(1):119--168, 2004.

\bibitem[HZ12]{HeZ}
H.~Hezari and S.~Zelditch.
\newblock {$C^\infty$} spectral rigidity of the ellipse.
\newblock {\em Anal. PDE}, 5(5):1105--1132, 2012.

\bibitem[JZ13]{JZ}
J.~Jung and S.~Zelditch.
\newblock Number of nodal domains and singular points of eigenfunctions of
  negatively curved surfaces with an isometric involution.
\newblock {\em arXiv:1310.2919}, 2013.

\bibitem[KSS89]{KSS}
A.~Kr{\'a}mli, N.~Sim{\'a}nyi, and D.~Sz{\'a}sz.
\newblock Dispersing billiards without focal points on surfaces are ergodic.
\newblock {\em Comm. Math. Phys.}, 125(3):439--457, 1989.

\bibitem[Mel75]{M}
R.~B Melrose.
\newblock Microlocal parametrices for diffractive boundary value problems.
\newblock {\em Duke Math. J. .}, 42(4):605--635, 1975.

\bibitem[MS78]{MSj}
R.~B. Melrose and J.~Sj{\"o}strand.
\newblock Singularities of boundary value problems. {I}.
\newblock {\em Comm. Pure Appl. Math.}, 31(5):593--617, 1978.

\bibitem[MT85]{MT}
R.~B. Melrose and M.~E. Taylor.
\newblock Near peak scattering and the corrected {K}irchhoff approximation for
  a convex obstacle.
\newblock {\em Adv. in Math.}, 55(3):242--315, 1985.

\bibitem[Sin70]{Si1}
Ya.~G. Sina{\u\i}.
\newblock Dynamical systems with elastic reflections. {E}rgodic properties of
  dispersing billiards.
\newblock {\em Uspehi Mat. Nauk}, 25(2 (152)):141--192, 1970.

\bibitem[Sto89]{St2}
L.~Stojanov.
\newblock An estimate from above of the number of periodic orbits for
  semi-dispersed billiards.
\newblock {\em Comm. Math. Phys.}, 124(2):217--227, 1989.

\bibitem[Sto99]{St1}
L.~Stoyanov.
\newblock Exponential instability for a class of dispersing billiards.
\newblock {\em Ergodic Theory Dynam. Systems}, 19(1):201--226, 1999.

\bibitem[SZ02]{SoZ}
C.~D. Sogge and S.~Zelditch.
\newblock Riemannian manifolds with maximal eigenfunction growth.
\newblock {\em Duke Math. J.}, 114(3):387--437, 2002.

\bibitem[SZ13]{SoZ2}
C.~D. Sogge and S.~Zelditch.
\newblock Focal points and sup-norms of eigenfunctions.
\newblock {\em arXiv:1311.3999}, 2013.

\bibitem[SZ14]{SoZ3}
C.~D. Sogge and S.~Zelditch.
\newblock Sup norms of cauchy data of eigenfunctions on manifolds with concave
  boundary.
\newblock {\em arXiv:1411.1035}, 2014.

\bibitem[Tay76]{Tay}
M.~E. Taylor.
\newblock Grazing rays and reflection of singularities of solutions to wave
  equations.
\newblock {\em Comm. Pure Appl. Math.}, 29(1):1--38, 1976.

\bibitem[TZ09]{TZ2}
J.~A. Toth and S.~Zelditch.
\newblock Counting nodal lines which touch the boundary of an analytic domain.
\newblock {\em J. Differential Geom.}, 81(3):649--686, 2009.

\bibitem[Woj94]{W}
M.~Wojtkowski.
\newblock Two applications of jacobi fields to the billiard ball problem.
\newblock {\em J. Differential Geom.}, 40(1):155--164, 1994.

\bibitem[Zel92]{z}
S.~Zelditch.
\newblock Kuznecov sum formulae and {S}zeg{\H o}\ limit formulae on manifolds.
\newblock {\em Comm. Partial Differential Equations}, 17(1-2):221--260, 1992.

\bibitem[ZL07]{ZL}
H.~Zhang and J.~Lian.
\newblock Hyperbolic behavior of {J}acobi fields along billiard flows.
\newblock In {\em Proceedings of the $4$th International Conference on
  Impulsive and Hybrid Dynamical Systems}, volume~54 of {\em Proc. Sympos. Pure
  Math.}, pages 1794--1798. Watan Press, Providence, RI, 2007.

\end{thebibliography}
\bibliographystyle{alpha}

\end{document}